\definecolor{cof}{RGB}{219,144,71}
\definecolor{pur}{RGB}{186,146,162}
\definecolor{greeo}{RGB}{91,173,69}
\definecolor{greet}{RGB}{52,111,72}
\newcommand{\Z}{\mathbb Z}
\newcommand{\K}{\mathbb{K}}
\newcommand{\F}{\mathbb{F}}
\newcommand{\cclassmacro}[1]{\texorpdfstring{\textbf{#1}}{#1}\xspace}
\newcommand{\p}{\cclassmacro{P}}
\newcommand{\np}{\cclassmacro{NP}}
\newcommand{\cB}{\mathcal B}
\newcommand{\cI}{\mathcal I}
\newcommand{\cV}{\mathcal V}
\newcommand{\aalpha}{\boldsymbol{\alpha}}
\newcommand{\bbeta}{\boldsymbol{\beta}}
\newcommand{\eeta}{\boldsymbol{\eta}}
\DeclareMathOperator{\supp}{supp}
\crefname{example}{Example}{Examples}
\crefname{openproblem}{Problem}{Problems}
\crefname{hypothesis}{Hypothesis}{Hypotheses}
\title{Graphs with large girth and chromatic number are hard for Nullstellensatz \thanks{Preliminary versions of some of the results appeared in the first author's PhD Thesis \cite{julian}.} %\thanks{Submitted to the editors \today}. 
\funding{This work was funded in part by NSERC Discovery Grants, Tutte Scholarship, U.S. Office of Naval Research under award numbers: N00014-15-1-2171 and N00014-18-1-2078. This financial support is gratefully acknowledged.} }
\author{Julian Romero\thanks{Amazon.com, Inc., Vancouver, BC V6B 1X4, Canada
  (\email{jromerob@amazon.com}, \email{jaromero@uwaterloo.ca})}
\and Levent Tun\c{c}el \thanks{Department of Combinatorics and Optimization, Faculty of Mathematics, University of Waterloo, Waterloo, ON N2L 3G1, Canada
  (\email{levent.tuncel@uwaterloo.edu}).}
}
\newcommand*{\addFileDependency}[1]{% argument=file name and extension
  \typeout{(#1)}% latexmk will find this if $recorder=0 (however, in that case, it will ignore #1 if it is a .aux or .pdf file etc and it exists! if it doesn't exist, it will appear in the list of dependents regardless)
  \@addtofilelist{#1}% if you want it to appear in \listfiles, not really necessary and latexmk doesn't use this
  \IfFileExists{#1}{}{\typeout{No file #1.}}% latexmk will find this message if #1 doesn't exist (yet)
}
\newcommand*{\myexternaldocument}[1]{%
    \externaldocument{#1}%
    \addFileDependency{#1.tex}%
    \addFileDependency{#1.aux}%
}
\begin{document}

\maketitle

% REQUIRED
\begin{abstract}
We study the computational efficiency of approaches, based on Hilbert's Nullstellensatz, which use systems of linear equations for detecting non-colorability of graphs having large girth and chromatic number. We show that for every non-$k$-colorable graph with $n$ vertices and girth $g>4k$, the algorithm is required to solve systems of size at least $n^{\Omega(g)}$ in order to detect its non-$k$-colorability.
\end{abstract}

% REQUIRED
\begin{keywords}
  Graphs coloring, Graphs with large girth, Hilbert's Nullstellensatz.
\end{keywords}

% REQUIRED
\begin{AMS}
  05E14, 05C15, 03F20.
\end{AMS}

%Introduction
\section{Introduction}

The use of algebraic geometry methods in combinatorial optimization has increased in popularity over the last couple of decades. The ability to reformulate hard optimization problems using simple multivariate polynomial formulations can be quite appealing. Especially, because these formulations can often be \textit{relaxed} to obtain computationally tractable approximations to a hard combinatorial optimization problem. Popular examples of this are hierarchy  based approaches such as the Balas-Ceria-Cornu\'{e}jols, Sherali-Adams, Lov\'asz-Schrijver and Sum of Squares, or Lasserre relaxations for optimization problems (see for instance \cite{AuT2016,CT12}), where \textit{Linear Programming} and \textit{Semidefinite Programming} problems are used as building blocks to construct tighter and tighter relaxations to the optimization problem.  

Another algebraic approach for combinatorial problems, implicit in the work of Beame \textit{et al.} \cite{Beame94} and later proposed by De Loera \textit{et al.} \cite{loeraetal08} and Margulies \cite{Margulies08}, uses Hilbert's Nullstellensatz to create a hierarchy of relaxations based on \textit{Systems of Linear Equations}.  More concretely, one first formulates (the decision version of) a combinatorial problem using a system of  polynomial equations
\begin{equation}\label{Eq:PolySys0}
f_1(x)=f_2(x)=\cdots=f_m(x)=0,
\end{equation}
for polynomials $f_i\in \K[x_1, \dots, x_n]$ on $n\geq 1$ variables over some field $\K$. This is done in a way that the system \eqref{Eq:PolySys0} has a solution over the algebraic closure $\overline{\K}$ if and only if the combinatorial problem has a solution. By Hilbert's Nullstellensatz (see for instance \cite{cox}), the problem does not have a solution if and only if there exist polynomials $r_1, \dots, r_m\in \K[x_1,\dots, x_n]$, which we call \textit{Nullstellensatz Certificates}, such that 
\begin{equation}\label{Eq:NulCert0}
r_1(x)f_1(x)+\dots+r_m(x)f_{m}(x)=1.
\end{equation}
Although the Nullstellensatz has been used to obtain interesting results in combinatorics (see \cite{Alon1999}),  a crucial observation made by De Loera et al. is that the existence Nullstellensatz Certificates $r_1, \dots, r_m$ of degree at most $d$,  can be determined using a \textbf{system of linear equations} over $\K$. This allows us to use a hierarchy of systems of linear equations to solve the combinatorial problem. For general systems of polynomial equations, the maximum degree of the Nullstellensatz Certificates can be exponential in $n$ and $m$. However, for systems based on combinatorial problems with a special structure, the maximum degree of the certificates can be small, which makes the method potentially computationally attractive, as  solving systems of linear equations is usually significantly faster than solving linear or semidefinite programs of comparable size.  

In a series of papers, De Loera \textit{et al.} \cite{loeraetal08,DeLoera10, DeleoraLeeMarguliesMiller2015} studied the Nullstellensatz approach for several combinatorial problems, with special attention given to \textit{Graph Coloring}. Recall that  for a graph $G=(V,E)$ and an integer $k\geq 2$, the graph $G$ is \textit{$k$-colorable} if it is possible to assign $k$ colors to  its vertices in a way that no pair of adjacent vertices have the same color.  The polynomial formulation they used for the $k$-coloring problem, due to  Bayer \cite{Bayer82}, is given by the system

\begin{equation}\label{BCOL_k}
\tag{BCOL}
\begin{aligned}
p_u(x)&:=x_u^k-1=0, &\quad \forall u\in V,\\
q_{uv}(x)&:=\frac{x_u^{k}-x_v^{k}}{x_u-x_v}=0, &\quad \forall \{u,v\}\in E.
\end{aligned}
\end{equation}

The computational experiments of De Loera \textit{et al.} showed the potential applications of the Nullstellensatz method in detecting non-$3$-colorability of graphs. They were able to solve known hard instances, such as the Mizuno and Nishihara \cite{hard3} families, of non-$3$-colorable graphs with up to a thousand of vertices over $\F_2$. In fact, no graph needing a Nullstellensatz certificate of degree larger than \textit{four} was encountered at the time. This was quite surprising, given the fact that unless $\p=\np$, families of  graphs needing large Nullstellesatz Certificates must exist.   

The problem of finding non-$k$-colorable graphs needing large certificates was settled until recently by Lauria and Nordstr\"om \cite{LN17}. Their proof consisted of a nifty \textit{Polynomial Calculus} (PC) reduction from the Functional Pigeonhole Principle (FPHP) to Graph Coloring. Since lower bounds for the degrees of PC proofs for special instances of FPHP had already been shown in \cite{MN15}, the result follows for $k$-colorability as well, implying the need of large Nullstellensatz Certificates for those instances. The graphs found in \cite{LN17} yield asymptotically tight results in the sense that these graphs need certificates whose degrees share the same order of magnitude as the number vertices in the graph times $k$. 

Despite these results, it is still wide open  to determine the intrinsic \textit{combinatorial properties} of non-$k$-colorable graphs which require them to have small (or large) Nullstellensatz Certificates. For instance, De Loera et al. \cite{DeLoera10} fully characterized all non-$3$-colorable graphs having a Nullstellensatz certificate of degree one over $\F_2$.  However, it is still an open question to characterize all non-$3$-colorable graphs needing certificate of degree at most four over $\F_2$. The only related result we know is due to Li, Lowenstein and Omar \cite{Omar15}, who showed that no $4$-critical graph with at most $12$ vertices has a Nullstellensatz Certificate of degree larger than four over $\F_2$.

In order to understand this problem further, it is natural to study families of graphs that are hard for coloring problems, such as the instances studied in \cite{hard3} among others. One novel example of these are graphs with large \textit{girth}, the length of the shortest cycle in the graph. A classical result of Erd\"{o}s \cite{erdosgirth} establishes the existence of graphs having arbitrarily large girth and chromatic number. These graphs locally look like trees. Thus, it is "easy" to color them locally, however a global understanding of the graph is needed in order to determine their chromatic number. 

Explicit examples of graphs with large girth and chromatic number can be found in \cite{lovaszgirth, ramanujan} and more recently in \cite{Alon1}. Most of the known explicit examples of $k$-colorable graphs having large girth are also fairly large in size. In fact, if we denote by $n(g,k)$ the number vertices of the smallest graph having chromatic number $k$ and girth $g$, it is known that (see \cite{Exoo} for lower bound and \cite{Marshall} for upper bound)
\begin{equation}
\frac{2(k-2)^{(g-1)/2}-2}{k-3}\leq n(g,k)\leq 9gk^{6g+1}.
\end{equation}
In particular, the size of these graphs are exponential in their girth. In this article, we show how to exploit the local structure of this family of graphs to prove that non-$k$-colorable graphs with large girth also need large Nullstellensatz Certificates. More concretely, we show the following. 
 
\begin{theorem}\label{Thm:Main}
Let $G=(V,E)$ be a graph with chromatic number $\chi(G)=k+1$ and girth $g>4k$. Then, for every 	non-negative integer $d$ satisfying
\begin{equation}\label{Eq:MainTheorem}
d+k-1<\frac{g}{4k},
\end{equation}
$G$ has no Nullstellensatz Certificates of degree at most $d$ for the system \eqref{BCOL_k}.
\end{theorem}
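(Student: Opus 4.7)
The plan is a proof by contradiction via the standard ``designer functional'' (dual certificate) technique for Nullstellensatz degree lower bounds. Suppose polynomials $r_u, r_{uv} \in \K[x_1,\ldots,x_n]$ of degree at most $d$ satisfy
$$\sum_{u\in V} r_u(x)\, p_u(x) \;+\; \sum_{\{u,v\}\in E} r_{uv}(x)\, q_{uv}(x) \;=\; 1.$$
I will construct a linear functional $L \colon \K[x]_{\le d+k}\to \K$ satisfying $L(1)=1$, $L(m\,p_u)=0$ for every monomial $m$ of degree $\le d$ and vertex $u\in V$, and $L(m\,q_{uv})=0$ for every monomial $m$ of degree $\le d$ and edge $\{u,v\}\in E$. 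Applying $L$ to the above identity then yields $1=L(1)=0$, the desired contradiction.

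To build $L$, I would exploit the local tree-like structure of $G$ forced by the girth. A monomial $x^\alpha$ of degree $\le d+k$ has support $S = \supp(\alpha)$ of size at most $d+k$. I would introduce a combinatorial closure $\mathrm{cl}(S)\supseteq S$ — roughly, adjoining to $S$ the short paths between its elements in $G$ together with endpoints of all edges relevant for the vanishing conditions — sized so that \eqref{Eq:MainTheorem} forces $G[\mathrm{cl}(S)]$ to be a forest (any cycle would have length below $g$, contradicting the girth). Since a forest is $k$-colorable, I form the uniform distribution $\mu_S$ over proper colorings $\chi\colon \mathrm{cl}(S)\to\{\omega^0,\ldots,\omega^{k-1}\}$ with $\omega$ a primitive $k$-th root of unity, and set $L(x^\alpha):=\mathbb{E}_{\chi\sim\mu_S}[\chi^\alpha]$, extended linearly.

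Two properties then require verification. First, $L$ must be well-defined on $\K[x]_{\le d+k}$: values on monomials with overlapping supports must cohere. This reduces to marginal consistency — for $S\subseteq T$ within the admissible regime, the marginal of $\mu_T$ on $\mathrm{cl}(S)$ equals $\mu_S$ — which holds on forests because a uniform proper $k$-coloring can be sampled by rooting the tree and propagating outward, each child receiving a uniformly random color among the $k-1$ hues distinct from its parent's, so pruning pendant branches leaves the distribution on the remaining subtree unchanged. Second, the vanishing conditions are formal: $L(m\,p_u) = L(m\,x_u^k) - L(m) = 0$ because $\chi(u)^k=1$ for every $k$-th root of unity, while $L(m\,q_{uv})=0$ because $q_{uv}(\chi)=\sum_{i=0}^{k-1}\chi(u)^{i}\chi(v)^{k-1-i}=0$ whenever $\chi(u)\neq\chi(v)$, a condition guaranteed by the properness of $\chi$ across the edge $\{u,v\}\subseteq \mathrm{cl}(S)$.

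The principal obstacle I anticipate is engineering the closure operator $\mathrm{cl}(\cdot)$ with just the right amount of slack: it must be rich enough to contain every edge needed for the $q_{uv}$ argument and to support the marginal-consistency step, yet sparse enough that the girth continues to rule out cycles. The factor $4k$ in \eqref{Eq:MainTheorem} should emerge from the pairwise merging of closures during the marginal-consistency argument — each of the two subsets of size $\le d+k-1$ contributes a local ``$2k$-expansion,'' and the merged structure avoids cycles exactly when $g > 4k(d+k-1)$, which is the hypothesis. Once this combinatorial scaffolding is in place, the remainder of the proof reduces to the routine evaluations outlined above.
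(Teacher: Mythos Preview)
Your high-level architecture---construct a linear functional $L$ locally via random proper colorings of forest neighborhoods and patch---is standard and in spirit close to what the paper does. But there is a genuine gap at the step you yourself flag as the principal obstacle, and it is more than a matter of tuning constants.

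Your marginal-consistency claim is false as stated. You assert that for $S\subseteq T$ the marginal of the uniform proper-$k$-coloring distribution $\mu_T$ on $\mathrm{cl}(S)$ equals $\mu_S$, because ``pruning pendant branches leaves the distribution unchanged.'' That argument only applies when $\mathrm{cl}(T)\setminus\mathrm{cl}(S)$ consists of subtrees attached to $\mathrm{cl}(S)$ at single vertices. It fails whenever $\mathrm{cl}(T)$ contains a path \emph{through} $\mathrm{cl}(T)\setminus\mathrm{cl}(S)$ joining two distinct components of $\mathrm{cl}(S)$: the marginal then correlates those components, whereas $\mu_S$ has them independent. Concretely, take $k=3$ and the path $a\!-\!c\!-\!b$; with $\mathrm{cl}(\{a,b\})=\{a,b\}$ one gets $L(x_ax_b^2)=0$, but with $\mathrm{cl}(\{a,b,c\})=\{a,c,b\}$ one gets $L(x_ax_b^2)=\tfrac14$. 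So the functional is not well-defined unless your closure rules out this ``bridging'' phenomenon---and nothing in your sketch does. Merely enlarging neighborhoods to radius $\approx 2k$ will not help, since two vertices of $\supp(\alpha)$ can sit at distance up to $g-1$ and still be joined inside a slightly larger closure.

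The paper confronts exactly this difficulty and resolves it by \emph{abandoning the uniform-coloring functional}. Instead it fixes a vertex ordering coming from a $(k{+}1)$-coloring (so index-increasing paths have length $\le k$), associates to each monomial $x^{\aalpha}$ an \emph{essential graph} $H_\aalpha$ (the descendant graph closed under common-or-adjacent parents), and defines $\lambda_\aalpha$ as the $\aalpha$-coordinate of the \emph{basic solution} to the local dual system, taken with respect to the basis of leading monomials $\cB_{F_\aalpha,d}$. Consistency across different $\aalpha$ is then a linear-algebraic fact (rows of the RREF of the smaller system are rows of the RREF of the larger one), which in turn rests on the core Lemma~\ref{Lem:Core1}: reducibility of $x^\aalpha$ modulo $\cI_{F'}$ for any forest $F'\supseteq F_\aalpha$ is equivalent to reducibility modulo $\cI_{F_\aalpha}$, \emph{provided} no two components of $H_\aalpha$ have common or adjacent parents. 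Example~\ref{Ex1} in the paper shows this proviso is not decorative---without it even the algebraic consistency breaks. Your random-coloring functional does not enjoy the analogous stability, so your patching step would fail on precisely the configurations that forced the paper to introduce essential graphs. To salvage your route you would need a closure that is \emph{convex} in every larger closure (so extensions are always pendant), and it is not clear such a closure exists within the stated girth budget; the paper's solution sidesteps this by replacing probability with Gr\"obner-basis bookkeeping.
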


We follow a similar approach to  the work of Razborov \cite{Raz98}, Aleknovich and Razborov \cite{AR03} (later expanded in \cite{MN15}) for boolean systems. A key idea is to understand the principal ideal of subsystems of \eqref{BCOL_k} corresponding to local subgraphs. For graphs with large girth, small enough subgraphs are forests, whence $k$-colorable for $k\geq 2$. In particular, no Nullstellensatz Certificate exists for their corresponding subsystems. This can be witnessed by what we call a \textit{Dual Nullstellensatz Certificate} of the subsystem, which is constructed using information from the \textit{standard monomials} of the ideal generated by the polynomials in the subsystem.  These "local" dual certificates are then patched to create a "global" dual certificate for the whole system, thus proving that the system does not admit a Nullstellensatz Certificate of certain degree. 

We should point out that, while being partly inspired by the work in \cite{AR03} and \cite{MN15},  our work does not seem to be a direct consequence of these studies. Besides them being applicable to Boolean systems only,  one key component in their result requires the polynomial-variable incidence graph (or a clustering of it, as shown in \cite{MN15}) of the system of polynomial equations to be a "good enough" expander. However,  large girth alone does not seem to imply good expansion properties of the polynomial-variable incidence graph (or a clustering of this) for Bayer's polynomial system. In any case, if these properties were to carry over, we would still need to adjust the proofs in \cite{MN15} to apply for more general set of systems of polynomial equations or use a different encoding for the graph coloring problem using Boolean systems, such as the one used in \cite{LN17}.

Instead,  we found that understanding  the structure of the principal ideal of local subsystems, as it is done in \cite{Raz98} for systems arising from the \textit{Pigeonhole Principle}, was critical and it allowed us to further understand the behavior of the Nullstellensatz and Polynomial Calculus proof systems for graph coloring in general.

%Preliminaries
\section{Notation and Preliminaries.}

For a positive integer $n$, let $[n]$ be the set $\{1,\dots, n\}$. Let $\K$ be an algebraically closed field and let $\K[x_1,\dots, x_n]$ be the ring of polynomials with coefficients in $\K$. Monomials in $\K[x_1,\dots, x_n]$ are denoted using multi-index notation $x^{\aalpha}:=x_{1}^{\aalpha_1}\cdots x_{n}^{\aalpha_n}$ where $\aalpha:=(\aalpha_1,\dots, \aalpha_n)$ is a non-negative integer vector. The degree of the monomial $x^{\aalpha}$ is defined as
$$
|\aalpha|:=\aalpha_1+\dots+\aalpha_n
$$
and the support of $\aalpha$, denoted by $\supp(\aalpha)$, is the set of indices $i\in [n]$ such that $\aalpha_i>0$. For a non-negative integer $d$, the set $\K[x_1,\dots, x_n]_{\leq d}$ denotes the vector space of polynomials of degree at most $d$, i.e., the set of polynomials $f\in \K[x_1,\dots, x_n]$ that can be written as
\begin{equation}\label{Eq:PolyDef}
f(x)=\sum_{|\aalpha|\leq d}f_{\aalpha}x^{\aalpha}
\end{equation}
for some scalars $f_{\aalpha}\in \K$. The support of $f$, denoted by $\supp(f)$ is the set of multi-indices $\aalpha$ such that $f_{\aalpha}\neq 0$.

Division algorithms over $\K[x_1,\dots, x_n]$ are possible once a \textit{monomial ordering} has been established. Some commonly used monomial orderings are
\begin{enumerate}
\item The \textit{Lexicographic Order} (LEX). For any pair of monomials $x^{\aalpha}$ and $x^{\bbeta}$ we write $x^{\aalpha}\preceq_{LEX} x^{\bbeta}$ if there exists a positive integer $i\in [n]$ such that $\aalpha_j=\bbeta_j$ for all $j<i$ and $\aalpha_i<\bbeta_i$. 
\item The \textit{Graded Lexicographic Order} (GLEX). For any pair of monomials $x^{\aalpha}$ and $x^{\bbeta}$ we write $x^{\aalpha}\preceq_{GLEX} x^{\bbeta}$ if either $|\aalpha|<|\bbeta|$, or $|\aalpha|=|\bbeta|$ and $x^{\aalpha}\preceq_{LEX} x^{\bbeta}$.
\end{enumerate}
Unless stated otherwise, in this article we will use the graded lexicographic order (GLEX) and we set $\preceq:=\preceq_{GLEX}$. For a given polynomial $f\in \K[x_1,\dots, x_n]$,  its \textit{leading monomial}, denoted by $LM(f)$, is the largest (in GLEX order) monomial in the support of $f$.  
 
Given a finite set of polynomials $F:=\{f_1,\dots, f_m\}\subseteq \K[x_1,\dots, x_n]$, the \textit{ideal} generated by $F$ is the set  
\begin{equation}
\langle F\rangle:=\{r_1f_1+\dots+r_mf_{m}: r_i\in \K[x_1,\dots, x_n]\}.
\end{equation}
The \textit{variety} defined by $F$, denoted by $\cV(F)$, is the set of solutions to the system
\begin{equation}\label{Eq:PolySys}
f_1(x)=f_2(x)=\cdots=f_m(x)=0.
\end{equation}
The following notion will be used quite often in this article. 
\begin{definition}
Let $\cI$ of $\K[x_1,\dots, x_n]$ be an ideal. We denote by $LM(\cI)$ the set of all leading monomials of polynomials in $\cI$. If $x^{\aalpha}\in LM(\cI)$, then we say that the monomial $x^{\aalpha}$ is \textbf{reducible modulo} the ideal $\cI$, otherwise we say that the monomial is \textbf{irreducible modulo} $\cI$.
\end{definition}
Recall that a \textit{Gr\"{o}bner basis} of $\cI$ is a set of polynomials $\{g_1,\dots, g_r\}$ such that $\cI=\langle g_1,\dots, g_r\rangle$ and 
$$
\langle LM(\cI)\rangle=\langle LM(g_1), \dots, LM(g_r)\rangle.
$$
In particular, a monomial $x^{\aalpha}$ is reducible modulo the ideal $\cI$ if and only if $x^{\aalpha}$ is divisible by $LM(g_i)$ for some $i\in [r]$. The following notion is also key in this article. 
\begin{definition}
Let $\cI$ be an ideal and let $f\in \K[x_1,\dots, x_n]$ be any polynomial. Let $g_1,\dots, g_r$ be a Gr\"{o}bner Basis of $\cI$. The \textit{reduction} (or, \textit{normal form}) of $f$ modulo $\cI$ is the remainder of the division of $f$ by the basis $g_1,\dots, g_r$, i.e., it is the unique polynomial $\phi_{\cI}(f)\in \K[x_1,\dots, x_n]$ such that
\begin{enumerate}
\item $f=g+\phi_{\cI}(f)$ for some $g\in \cI$, and 
\item no monomial in $\phi_{\cI}(f)$ is reducible modulo $\cI$.
\end{enumerate}   
\end{definition}

Hilbert's Nullstellensatz \cite{Hil1893, TaoNulls, cox}, in its most basic form, is the statement that $\cV(F)=\emptyset$ if and only if $1\in \langle F\rangle$. In other words, the system \eqref{Eq:PolySys} has no solution if and only if
\begin{equation}\label{Eq:NulCert}
r_1(x)f_1(x)+\dots+r_m(x)f_{m}(x)=1
\end{equation}
for some polynomials $r_1,\dots, r_m\in \K[x_1,\dots, x_n]$. In particular, we say that the polynomials $r_1, \dots, r_m$ in \eqref{Eq:NulCert} form a \textit{Nullstellensatz Certificate} of degree $d$ if each polynomial $r_i$ has degree at most $d$. A crucial observation is that Nullstellensatz Certificates of degree $d$ can be found by solving a system of linear equations as easily seen from \eqref{Eq:NulCert}. Indeed,  such system is found by considering each $r_i$ in \eqref{Eq:NulCert} with variable coefficients and then equating the resulting polynomial with the constant polynomial $1$. Let us denote this system of equations by
\begin{equation}\label{Eq:LinSystem}
A_{F,d} y=b_{F,d}.
\end{equation}
It is a result of Kollar \cite{kollar} that if the system \eqref{Eq:PolySys} is infeasible, then there exists a Nullstellensatz Certificate of degree $d_K:=\max(3,d_{\max})^{\min(n,m)}$ where $d_{\max}$ is the largest degree of the polynomials in $F$. However, in order for the algorithm to detect feasibility, a system of size  
$$\Theta\left(\frac{4^{d_K}}{d_k^{1/2}}\right)$$ should be solved. Since $d_k$ is exponential in $n$, the system ends up being doubly exponential in the number of variables. There are some cases where such size can be reduced. For instance, if the polynomials in $F$ have no common root at infinity, a theorem of Lazard \cite{Lazard77} allows us to reduce Kollar's bound to $d_{L}:=n\cdot(d_{\max}-1)$. Thus, feasibility can be checked by solving a linear system of size singly exponential in the number of variables.

\subsection{Graph Coloring and Subgraph Ideals}
Let $G=(V,E)$ be a graph with vertex set $V=[n]$ for some $n\in \Z_+$. All graphs in this article are simple, finite and undirected. A graph is said to be $k$-colorable if there exists a map $\kappa:V\rightarrow [k]$ such that $\kappa(u)\neq \kappa(v)$ for all edges $uv\in E$. The minimum $k\in \Z_+$ for which $G$ is $k$-colorable is called the \textit{chromatic number} of $G$ and we denote this number by $\chi(G)$. Recall, the \textit{girth} of $G$ is the length of the shortest cycle in $G$.

%The fact that one can cast a $k$-colorability problem as a system of polynomial equations was noted by Bayer \cite{Bayer82} and it is described as follows. Let $\K$ be an algebraically closed field with characteristic relatively prime to $k$, then the system of polynomial equations in $\K[x_u: u\in V]$ given by
%
%\begin{equation}\label{BCOL_k}
%\tag{$\text{BCOL}_k$}
%\begin{aligned}
%p_u(x)&:=x_u^k-1=0, &\quad \forall u\in V,\\
%q_{uv}(x)&:=\frac{x_u^{k}-x_v^{k}}{x_u-x_v}=0, &\quad \forall \{u,v\}\in E,
%\end{aligned}
%\end{equation}
%has a solution if and only if $G$ is $k$-colorable. Indeed, the first set of polynomials assigns a $k$-th root of the unity to each variable $x_u$ with $u\in V$, and the second set of polynomials guarantees that $x_u\neq x_v$ for all $\{u,v\}\in E$. This last condition is easily seen from the equation
%$$
%0=p_{u}(x)-p_v(x)=( x_u- x_v)\cdot q_{uv}( x).
%$$
Let $\K$ be a field of characteristic not dividing $k$. For a vertex $u\in V$ and edge $vw\in E$, let $p_u(x)$ and $q_{vw}(x)$ be the polynomials defined in Bayer's formulation:
\begin{equation}
\tag{BCOL}
\begin{aligned}
p_u(x)&:=x_u^k-1=0, &\quad \forall u\in V,\\
q_{uv}(x)&:=\frac{x_u^{k}-x_v^{k}}{x_u-x_v}=0, &\quad \forall \{u,v\}\in E.
\end{aligned}
\end{equation}
Notice that the graph $G$ is $k$-colorable if and only if \eqref{BCOL_k} has a solution: the first set of polynomial equations tells us that we aim to color the graph with $k$-roots of the unity and the second set of polynomial equations encode the fact that no pair of adjacent vertices can be assigned the same $k$-th root of the unity. Indeed, if $x_u=x_v=\zeta$ for some root of the unity $\zeta\in \K$ then
\begin{equation}
\begin{aligned}
q_{uv}(x)|_{x_u=x_v=\zeta}&=x_u^{k-1}+x_u^{k-2}x_v+\dots+x_ux_v^{k-2}+x_v^{k-1}|_{x_u=x_v=\zeta},\\
&=k\cdot \zeta^{k-1}\neq 0,
\end{aligned}
\end{equation}
as the characteristic of $\K$ does not divide $k$. Let $\cI_{V,k}$ be the ideal generated by the polynomials $p_u$ with $u\in V$. Consider the quotient ring $R_{V,k}:=\K[x_u: u\in V]/\cI_{V,k}$, i.e., the set of all congruent classes of polynomials modulo the ideal $\cI_{V,k}$. Then, every polynomial $f$ is congruent with the polynomial 
$$
f(x)\equiv \sum_{\aalpha \in \Z_k^V} c_{\aalpha} x^{\aalpha} \quad \mod \cI_{V,k},
$$
for some $c_{\aalpha}\in \K$ and  $\aalpha\in \Z_k^V$, where $\Z_k$ is the set of integers modulo $k$. Such a representation exhibits the fact that $R_{V,k}$ is a finite dimensional vector space, which is isomorphic to the space of functions $\aalpha\mapsto c_{\aalpha}$ mapping $\Z_k^V$ to $\K$. 

Given a subset of edges $F\subseteq E$, we let $\cI_{F}$ be the ideal of $R_{V,k}$ generated by the polynomials $q_{uv}$ with $uv\in F$. Since the polynomials $x^k_u-1$ are square free, the ideal $\cI_{F}$ is radical (see \cite[Proposition 2.7]{cox2}). Thus, we have that $f\in \cI_{F}$ if and only if $f(a)=0$ for every valid $k$-coloring $a=(a_w)_{w\in V(F)}$ of the graph induced by $F$, that is $a_w^k=1$ for all $w\in V(F)$ and $a_v\neq a_w$ for all $vw\in F$.

Clearly, the existence of a Nullstellensatz Certificate of degree $d$ for \eqref{BCOL_k} guarantees the existence of polynomials $r_{uv}$ for $uv\in E$ of degree at most $d$ such that
\begin{equation}\label{Eq:NulMODIkd}
\sum_{uv\in E}r_{uv}(x) q_{uv} (x)\equiv 1 \mod \cI_{V,k}.
\end{equation}
Therefore, in order to find lower bounds for Nullstellensatz Certificates, it is enough to show that there are no polynomials $r_{uv}$ of degree at most $d$ in $R_{V,k}$ satisfying \eqref{Eq:NulMODIkd}. This in turn can be certified using the following lemma. In the lemma, $e_u\in \Z_{k}^V$ with $u\in V$ denote the standard vectors of $\Z_k^V$.
\begin{lemma}\label{Lem:DCOL}
Let $G=(V,E)$ be a graph. Suppose there exists some vector $\lambda=(\lambda_{\aalpha})_{\aalpha\in \Z_k^V, |\aalpha|\leq d+k-1}$ with entries in $\K$ such that
\begin{equation}\tag{DCOL}\label{DCOL}
\begin{aligned}
\textstyle\sum_{r\in \Z_k} \lambda_{\aalpha+r(e_u-e_v)-e_v} &=0, & \forall \aalpha\in \Z_{k}^{V}, \ |\aalpha|\leq d, uv\in E, \\
\lambda_0&=1. & & \\
\end{aligned}
\end{equation}
Then, \eqref{BCOL_k} does not have a Nullstellensatz Certificate of degree $d$, or lower. 
\end{lemma}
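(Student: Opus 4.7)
The plan is to establish the contrapositive via duality: given a $\lambda$ satisfying \eqref{DCOL}, I would construct a linear functional $\Lambda$ on $R_{V,k}$ that vanishes on every combination of the form $\sum_{uv\in E} r_{uv}\,q_{uv}$ with $\deg r_{uv}\leq d$, yet satisfies $\Lambda(1)=1$. Since a Nullstellensatz Certificate of degree at most $d$ for \eqref{BCOL_k} would yield such a combination congruent to $1$ modulo $\cI_{V,k}$ (by \eqref{Eq:NulMODIkd}), applying $\Lambda$ to both sides would give $0=\Lambda(1)=1$, the desired contradiction.

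Concretely, I would define $\Lambda$ on the monomial basis $\{x^{\aalpha}:\aalpha\in \Z_k^V\}$ of $R_{V,k}$ by setting $\Lambda(x^{\aalpha}):=\lambda_{\aalpha}$ whenever $|\aalpha|\leq d+k-1$ and $\Lambda(x^{\aalpha}):=0$ otherwise, then extend by linearity. The normalization $\lambda_0=1$ gives $\Lambda(1)=1$ immediately, so the entire content is the vanishing claim. By linearity it suffices to prove $\Lambda(x^{\bbeta}q_{uv})=0$ for every monomial $x^{\bbeta}$ of degree $|\bbeta|\leq d$ and every edge $uv\in E$. Reducing $\bbeta$ componentwise modulo $k$, I may assume $\bbeta\in \Z_k^V$, since $x_w^k\equiv 1 \pmod{\cI_{V,k}}$ and the degree only decreases under such a reduction. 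Expanding $q_{uv}=\sum_{r=0}^{k-1}x_u^{k-1-r}x_v^{r}$ writes $x^{\bbeta}q_{uv}$ as a sum of $k$ monomials whose exponents, reduced again modulo $k$, all have total degree at most $|\bbeta|+k-1\leq d+k-1$, so every contribution lands where $\Lambda$ equals the prescribed $\lambda_{\aalpha}$. Performing the substitution $r\mapsto k-1-r$ on the resulting sum of $\lambda$-values then recovers exactly the expression $\sum_{r\in \Z_k}\lambda_{\bbeta+r(e_u-e_v)-e_v}$ appearing in \eqref{DCOL} with $\aalpha=\bbeta$, which vanishes by hypothesis.

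The main delicate step I anticipate is verifying the degree bound $d+k-1$ in every wrap-around case for the exponents $\bbeta_u+(k-1-r)$ and $\bbeta_v+r$ modulo $k$: one must check that in each of the four wrap/no-wrap configurations at the two coordinates, the resulting total degree after reduction is bounded by $|\bbeta|+k-1$ (the worst case being no wrap at either coordinate, which attains this bound exactly, so the choice $d+k-1$ in the hypothesis is tight). Once this degree accounting is confirmed, the proof collapses to the clean algebraic identification of the sum with \eqref{DCOL}. I would also note in passing that the hypothesis is invariant under swapping $u$ and $v$, because the multisets $\{r(e_u-e_v)-e_v:r\in \Z_k\}$ and $\{r(e_v-e_u)-e_u:r\in \Z_k\}$ coincide modulo $k$ under $r\mapsto k-1-r$; this matches the symmetry $q_{uv}=q_{vu}$ in \eqref{BCOL_k}, so no ambiguity arises from orienting each undirected edge.
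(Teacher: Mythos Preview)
Your proposal is correct and follows essentially the same duality argument as the paper: both construct a linear functional on $R_{V,k}$ (the paper calls it $\lambda$, you call it $\Lambda$) that sends $1$ to $1$ and annihilates every generator $x^{\aalpha}q_{uv}$ of $N(E,d)$, then read off the equations \eqref{DCOL} from those vanishing conditions. Your version is slightly more explicit than the paper's in two places---extending $\lambda$ by zero on monomials of degree $>d+k-1$, and checking the wrap-around degree bound---both of which the paper leaves implicit; otherwise the arguments coincide.
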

\begin{proof}
Consider the linear subspace $N(E,d)\subseteq R_{V,k}$ of all polynomials which can be written in the form 
$$
\sum_{uv\in E} r_{uv} q_{uv}
$$
for some $r_{uv}\in R_{V,k}$ of degree at most $d$. Clearly, $N(E,d)$ is spanned by all polynomials of the form 
$$
x^{\aalpha}q_{uv}(x)=\sum_{r=0}^{k-1}x^{\aalpha}x_{u}^{k-1-r}x_{v}^r, \quad \forall |\aalpha|\leq d, \ \forall \{u,v\}\in E.
$$
Then, $1\notin N(E,d)$ if and only if there exists a linear functional $\lambda:R_{V,k}\rightarrow \K$ such that $\lambda(1)=1$ and $\lambda(f)=0$ for every $f\in N(E,d)$. This last equation is equivalent to the equations
$$
\lambda(x^{\aalpha}q_{uv}(x))=\sum_{r=0}^{k-1}\lambda(x^{\aalpha}x_{u}^{k-1-r}x_{v}^r)=0 \quad \forall \aalpha\in \Z_k^V, |\aalpha|\leq d, \ \forall \{u,v\}\in E.
$$
Using the fact that $R_{V,k}$ is spanned by monomials $x^{\aalpha}$ with $\aalpha\in \Z_k^V$ and setting $\lambda_{\aalpha}:=\lambda(x^{\aalpha})$, we can further rewrite the above equation as 
\begin{equation*}
\begin{aligned}
\textstyle\sum_{r\in \Z_k} \lambda_{\aalpha+r(e_u-e_v)-e_v} &=0, & \forall \aalpha\in \Z_{k}^{V}, \ |\aalpha|\leq d, \{u,v\}\in E. \\
\end{aligned}
\end{equation*}
The statement follows.
\end{proof}
\begin{definition}\label{Def:DNCert}
In matrix notation, let $\hat{A}_{E,d} \lambda= \hat{c}_{E,d}$ be the system \eqref{DCOL}.  Any solution $\lambda$ to this system is called a \textbf{Dual Nullstellensatz Certificate} of degree $d$. 
\end{definition}
\begin{remark}
Notice that the columns of $\hat{A}_{E,d}$ are indexed by monomials of degree at most $d+k-1$. Hence, we can view each row of $\hat{A}_{E,d}$ as a polynomial in $R_{V,k}$.
\end{remark}

%Polynomial Calculus
\section{Polynomial Calculus and Related Work.}
%\begin{definition}
% Let $f_1, \dots, f_m, g \in \K[x_1, \dots, x_n]$ be given polynomials. A \textbf{polynomial calculus (PC) proof} of $g$ from $f_1, \dots, f_n$ is a sequence of polynomials $p_1, \dots, p_t$ such that $p_t=g$ and every $p_i$ in the sequence is either
%\begin{enumerate}
%\item one of the $f_1, \dots, f_m$, or
%\item the linear combination $\alpha p+\beta q$ of some previous polynomials $p,q$ in the sequence, where $\alpha, \beta$ are constants in $\K$, or 
%\item the product $x_j \cdot p$ of one previous polynomial in the sequence $p$ and any variable $x_j$ with $j\in[n]$.
%\end{enumerate} 
%The \textbf{degree} of a PC proof is the maximum degree of the polynomials appearing in the sequence $p_1, \dots, p_t$. When $g=1$, we call such proof a \textbf{polynomial calculus refutation} of $f_1, \dots, f_m$.
% \end{definition} 

A study of lower bounds for Nullstellensatz certificates has already appeared in the context of \textit{Propositional Proof Systems} in a paper by Beame \textit{et al.} \cite{Beame94}. Lower bounds were found for systems of polynomial equations derived from the  \textit{Modular Counting Principle} and the \textit{Pigeonhole Principle} \cite{Beame98}. Later, Clegg \textit{et al.} \cite{Clegg96} worked with a stronger proof system that is now called \textit{Polynomial Calculus} (PC).  In this system, given polynomials $f_1, \dots, f_m\in \K[x_1, \dots, x_n]$, the goal is to find a proof of the statement $1\in \langle f_1, \dots, f_m\rangle$ using a sequence of polynomials 
$p_1, \dots, p_t$, such that $p_t=1$ and every $p_i$ in the sequence is either
\begin{enumerate}
\item one of the $f_1, \dots, f_m$, or
\item the linear combination $\alpha p+\beta q$ of some previous polynomials $p,q$ in the sequence and $\alpha, \beta\in \K$, or
\item the product $x_j \cdot p$ of one previous polynomial $p$ in the sequence and any variable $x_j$ with $j\in[n]$.
\end{enumerate} 

It is not hard to see that the largest degree of the polynomials in the sequence is always smaller than or equal to the degree of a Nullstellensatz certificate (times the maximum of the degrees of the polynomials $f_i$). Thus, lower bounds for the degrees of PC refutations are lower bounds for the degrees of Nullstellensatz certificates. The converse it is not necessarily true. In fact, Clegg \textit{et al.} \cite{Clegg96} showed an exponential separation between these two for some systems of polynomial equations derived from the \textit{House-sitting Principle}, a generalization of the pigeonhole principle.

Notice that lower bounds for the degrees of PC refutations can be found by constructing an operator $\phi:\K[x_1,\dots, x_n]_d\rightarrow \K[x_1,\dots, x_n]_d$ such that 
\begin{enumerate}
\item[a.] $\phi(1)=1$,
\item[b.] $\phi(f_i)=0$ for all $i\in[m]$, and
\item[c.] for every $x^{\aalpha}$ of degree less than $d$ and every $j\in [n]$ \vspace{-2mm}
\[
\phi(x_jx^{\aalpha})=\phi(x_j\cdot\phi(x^{\aalpha})).
\]
\end{enumerate}

Such $\phi$ implies that no PC refutation of degree $d$ exists (see \cite{Raz98}). This is precisely how the bounds in \cite{Raz98}, \cite{AR03}, \cite{MN15} and ultimately the bounds in the present article are built. The idea is to define the operator  $\phi$ in a local fashion: each monomial $x^{\aalpha}$ is assigned a subset $F_{\aalpha}$ of the polynomials $f_1,\dots, f_m$ and then $\phi(x^{\aalpha})$ is defined to be the reduction of $x^{\aalpha}$ modulo the ideal $\langle F_{\aalpha}\rangle$. Clearly,  since we want $\phi$ to satisfy the properties a. to c. above, the sets $F_{\aalpha}$ should be chosen carefully. 

For instance, for Boolean systems, Aleknovich and Razborov \cite{AR03} construct the sets $F_{\aalpha}$ using expandability properties of the \textit{polynomial-variable incidence graph}.  Mik\v{s}a and Nordstr\"om \cite{MN15} use a similar construction using a suitable clustering of the polynomial-variable incidence graph. In our setup, the sets $F_{\aalpha}$ will correspond to suitable sub-forests that we construct using the non-$k$-colorability and large girth of our graphs (what we call \textit{essential graphs} in the section below).

%We should point out the authors in \cite{LN17} were mainly interested in proving the \textit{existence} of a family (efficiently constructable) non-$k$-colorable needing large Nullstellensatz certificates. However, the main goal of this manuscript is to understand what \textit{combinatorial properties} are required for the need of large Nullstellensatz certificates.
%
%In words of Aleknovich and Razborov \cite{AR03}, informally, "\textit{everthing we can infer in small degree we can also infer locally}". This is precisely what motivates our work: if a non-$k$-colorable graph $G$ has a small Nullsetellensatz certificate, then one should be able to detect its non-$k$-colorability by looking at small enough subgraphs of $G$.

%Essential Graphs
\section{Large Girth and Nullstellensatz Certificates}
Throughout this section $G=(V,E)$ will denote a graph with chromatic number $\chi(G)=k+1$ for some $k\geq 3$ and girth $g\geq 3$.  As before, let us identify the vertex set of $G$ with the set $[n]$ and let us consider the ring $R_{V,k}=\K[x_1,\dots, x_n]/\cI_{V,k}$. 

Our goal is to show that $G$ does not have a Nullstellensatz certificate of degree $d\ll g$. As stated in Lemma \ref{Lem:DCOL} and Definition \ref{Def:DNCert}, this can be achieved by finding a Dual Nullstellensatz Certificate $\lambda=(\lambda_{\aalpha})_{|\aalpha|\leq d+k-1}$ satisfying the system
\begin{equation}\label{DCOL_Mat}
\hat{A}_{E,d}\cdot \lambda=\hat{c}_{E,d}.
\end{equation}
The goal of this section is to exploit the sparsity of $G$ to show that such $\lambda$ can be defined locally. More concretely, for each $\aalpha\in \Z_k^V$ of degree at most $d+k-1$ we will associate a subgraph $H_{\aalpha}=(U_{\aalpha},F_{\aalpha})$ of $G$, which we call the \textit{essential graph} of $\aalpha$ (see Definition \ref{Def:EssentialGraphs}). As we will see, the graphs $H_{\aalpha}$ encode the local reducibility of $x^{\aalpha}$, i.e., $x^{\aalpha}$ will be reducible modulo $\cI_{F_{\aalpha}}$ if and only if it is reducible modulo $\cI_{F}$ for every $F\supseteq F_{\aalpha}$ not "too large". Moreover, the sparsity of $G$ will guarantee that each $H_{\aalpha}$ is $k$-colorable, so that the sub-system of linear equations
\begin{equation}\label{DCOL_Mat_Sub}
\hat{A}_{F_{\aalpha},d}\cdot \mu =\hat{c}_{F_{\aalpha}, d}
\end{equation}
has a "local" solution $\mu^{(\aalpha)}=(\mu^{(\aalpha)}_{\bbeta})_{|\bbeta|\leq d+k-1}$ for each $\aalpha$. Finally, by the reducibility property of $H_{\aalpha}$, we will see that the local solutions $\mu^{(\aalpha)}$ for each $\aalpha$ can be patched together to obtain a global solution $\lambda$ for \eqref{DCOL_Mat}, hence implying the non-existence of a Nullstellensatz certificate of degree $d$. 

\subsection{Orderigs and Essential Graphs}

Each bijection from $[n]$ to the vertices of $G$ induces a monomial order $\preceq$ of $R_{V,k}$, namely the Graded Lexicographic Order (GLEX) where 
$$
x_n\preceq x_{n-1} \preceq \dots \preceq x_1.
$$
Given an edge $\{u,v\}\in E$ with $u<v$, we say that $v$ is a \textit{child} of $u$ and that $u$ is a \textit{parent} of $v$. Also, paths $P=u_1 u_2\dots u_t$ in $G$ satisfying $u_1<u_2<\dots<u_n$ will be called \textit{index-increasing paths}.  If there exists an index-increasing path from $u$ to $v$, we say that $v$ is a descendant of $u$. 
\begin{definition}
Let $x^{\aalpha}$ be a monomial with $\aalpha\in \Z_k^V$. The \textbf{descendant graph} of $x^{\aalpha}$ is the subgraph $H_{\aalpha}^{(0)}=(U_{\aalpha}^{(0)},F_{\aalpha}^{(0)})$ of $G$ induced by the vertices in the support of $\aalpha$ and their descendants.  
\end{definition}
The following lemmas show that due to the sparsity and $(k+1)$-colorability of $G$, the descendant graphs have a very special structure. 
\begin{lemma}\label{Lem:Ordering}
There exists a labelling of the vertex set $V$ such that every index-increasing path of $G$ has length at most $k$.
\end{lemma}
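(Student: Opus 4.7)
The plan is to produce the labelling directly from a proper $(k+1)$-coloring of $G$, invoking the easy direction of the Gallai--Hasse--Roy--Vitaver theorem. Since $\chi(G) = k+1$, fix any proper coloring $c : V \to [k+1]$. Orient each edge $\{u,v\} \in E$ from the endpoint of smaller $c$-value to the endpoint of larger $c$-value; this is well-defined because $c(u) \neq c(v)$ for any edge, and the resulting orientation is acyclic since the color strictly increases along every directed walk.

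Next, choose any topological ordering of the resulting DAG, i.e., a bijection $\sigma : V \to [n]$ such that whenever an edge is oriented $u \to v$ we have $\sigma(u) < \sigma(v)$. Using $\sigma$ as the labelling of $V$, an edge $\{u,v\}$ satisfies $\sigma(u) < \sigma(v)$ if and only if it is oriented $u \to v$, so an index-increasing path $u_1 u_2 \cdots u_t$ is exactly a directed path in the orientation. Along such a path the colors satisfy $c(u_1) < c(u_2) < \cdots < c(u_t)$, and since $c$ takes at most $k+1$ distinct values we conclude $t \leq k+1$, i.e., the path has at most $k$ edges.

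The argument is essentially mechanical; the only point requiring care is the convention that the length of a path $u_1 \cdots u_t$ is its number of edges $t-1$, which is consistent with the bound $\chi(G) = k+1$ giving at most $k+1$ vertices on any monotone chain. I do not foresee a genuine obstacle: everything reduces to the standard observation that a proper coloring induces an acyclic orientation in which the longest directed path is bounded by the number of colors.
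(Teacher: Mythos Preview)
Your proof is correct and follows essentially the same approach as the paper: both take a proper $(k+1)$-coloring and label the vertices so that the color strictly increases along every index-increasing path, forcing such paths to have at most $k+1$ vertices. The paper simply writes down one particular topological ordering (labelling all vertices of color class $i$ before those of color class $i+1$), whereas you observe that any topological ordering of the coloring-induced acyclic orientation works.
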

\begin{proof}
Consider any $(k+1)$-coloring of the graph $G$, say with colors $1, 2, \dots, k+1$. Label the vertices of $G$ in a way such that the inequality $u<v$ holds for every vertex $u$ in color class $i$ and every $v$ in color class $j$ where $i<j$. Formally, this can be done as follows. First, let $n_i\geq 0$ be the number of vertices in the color class $i\in [k+1]$ and set $n_0=0$. Then, assign to each vertex in the color class $i\in[k+1]$ a unique label in the set
$$
\left\{\sum_{j=0}^{i-1}n_j +1, \dots, \sum_{j=0}^{i}n_j\right\}.
$$
Since there are only $k+1$ colors, no index-increasing path will have length larger than $k$.  
\end{proof}

\begin{lemma}\label{Lem:H0Forest-k-version}
Let $G=(V,E)$ be a graph with chromatic number $k+1$ and girth $g> 2k$. Order the vertices of $G$ according to Lemma \ref{Lem:Ordering}. Let $x^{\aalpha}$ be a monomial of degree at most $|\aalpha|< \frac{g}{2k}-1$. Then, the descendant graph of $x^{\aalpha}$ is a forest.
\end{lemma}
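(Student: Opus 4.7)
I would argue by contradiction. Suppose $H^{(0)}_{\aalpha}$ contains a cycle $C$, necessarily of length $L \geq g$ by girth. The plan is to construct a partition $V(C)=\bigsqcup_{u\in\supp(\aalpha)}I_u$ indexed by the support vertices and to show that each part satisfies $|I_u|\leq 2k$; summing this bound yields
\begin{equation*}
g \;\leq\; L \;=\; \sum_{u\in\supp(\aalpha)} |I_u| \;\leq\; 2k\,|\supp(\aalpha)| \;\leq\; 2k\,|\aalpha| \;<\; 2k\Bigl(\tfrac{g}{2k}-1\Bigr) \;=\; g-2k,
\end{equation*}
which is the desired contradiction. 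The case $|\aalpha|=0$ is trivial since then $H^{(0)}_{\aalpha}$ is empty, so I may assume $|\aalpha|\geq 1$, which together with the hypothesis automatically forces $g>4k$.

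The structural tool is that for each $u\in\supp(\aalpha)$ the induced subgraph of $G$ on $\bar{D}_u:=\{u\}\cup D_u$ is a tree $T_u$ rooted at $u$ of depth at most $k$. Uniqueness of the index-increasing path from $u$ to any descendant is forced by girth: two distinct such paths would close a cycle of length $\leq 2k<g$. Any additional $G$-edge within $\bar{D}_u$ would combine with the tree path between its endpoints to produce a cycle of length $\leq 2k+1$, again forbidden. I then pick an assignment $\pi\colon V(C)\to\supp(\aalpha)$ with $w\in\bar{D}_{\pi(w)}$ for each $w$ and set $I_u:=\pi^{-1}(u)$. Two cyclically consecutive vertices of $C$ both in $I_u$ are joined by a $G$-edge inside $\bar{D}_u$, hence a tree edge of $T_u$, so each maximal run of $I_u$ along $C$ is a path in $T_u$ and contains at most $2k+1$ vertices.

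The technical heart is upgrading this to the uniform bound $|I_u|\leq 2k$. An extremal run of $2k+1$ vertices can occur only when its underlying tree path passes through $u$ with both endpoints at the maximum depth $k$, which by Lemma~\ref{Lem:Ordering} forces $u$ to lie in colour~$1$ and both endpoints to lie in colour~$k+1$. For such an endpoint $v$, the unique cycle edge of $C$ incident to $v$ that lies outside the run lands on some $v'\in\bar{D}_{u'}$ with $u'\neq u$; the colour constraint $c(v')<c(v)=k+1$ forces $v'<v$, so appending the edge $v'\to v$ to the index-increasing path from $u'$ to $v'$ produces an index-increasing path from $u'$ to $v$ of length at most $k$, showing $v\in\bar{D}_{u'}$. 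I reassign $\pi(v):=u'$; since $v$ has depth $k$ in $T_{u'}$, its parent there lies at depth $k-1$, so absorbing $v$ into the run of $I_{u'}$ extends a tree path of length at most $2k-1$ to one of length at most $2k$, still within the tree diameter. The main obstacle is to show that the resulting cascade of reassignments terminates with $|I_u|\leq 2k$ for every $u$; I would control the cascade via a potential function counting the number of support vertices whose current run realises the extremum $2k+1$, and argue that each reassignment strictly decreases this count while preserving the property that every $w\in V(C)$ is covered by some $\bar{D}_{\pi(w)}$.
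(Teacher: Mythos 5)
Your decomposition is genuinely different from the paper's. The paper breaks the cycle $C$ into maximal \emph{index-increasing paths}, collects the minimum-index vertex of each into a set $U$, bounds $|U|\geq|C|/(2k)$ from below, and then shows each support vertex $u$ can account for at most $|C|/(g-2k)$ elements of $U$ (any two descendants of $u$ that are both local minima along $C$ are joined by a $G$-path of length $\leq 2k$ through $u$, so they must be at $C$-distance $\geq g-2k$). Dividing, at least $g/(2k)-1$ support vertices are needed, contradicting $|\aalpha|<g/(2k)-1$. You instead try to partition all of $V(C)$ by assigning each vertex to some ancestor in $\supp(\aalpha)$ and to bound every block $I_u$ by $2k$.

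There is a genuine gap, and it lives exactly where you expect trouble, only it is worse than you describe. First, your bound is only per \emph{run}: you show a single maximal contiguous arc of $C$ inside $I_u$ is a path in the tree $T_u$ and hence has at most $2k+1$ vertices, but $I_u$ may consist of several disjoint runs, and then $|I_u|$ can greatly exceed $2k+1$. Nothing in the argument rules this out, and the subsequent ``cascade'' only manipulates a single extremal run, so the sum $\sum_u|I_u|$ is never actually controlled. Second, even granting a single run per $u$, the upgrade from $2k+1$ to $2k$ is not proved: when you reassign a depth-$k$ endpoint $v$ into $I_{u'}$ you assert that the target run ``extends a tree path of length at most $2k-1$,'' but you have no control over $I_{u'}$, which could itself already be extremal, in which case absorbing $v$ would push it to $2k+2$ vertices, contradicting the tree-diameter bound rather than fixing anything. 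The proposed potential (number of extremal support vertices) therefore need not decrease, and you say yourself you ``would argue'' termination without doing so. Note also that the naive $2k+1$ bound is genuinely insufficient here: $(2k+1)|\aalpha|<(2k+1)(g/(2k)-1)=g+g/(2k)-2k-1$, which is not $<g$ once $g>4k^2+2k$, so the sharpening is essential and cannot be waved away.

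A cleaner repair along your own lines would be to notice that between any two consecutive runs of $V(C)\cap\bar D_u$ the connecting arc of $C$ lies entirely outside $\bar D_u$ and, together with the $T_u$-path between the run endpoints, forms a simple cycle; girth then forces that arc to have length at least $g-2k$, so the number of runs of each $V(C)\cap\bar D_u$ is at most $|C|/(g-2k)$. But if you track this quantity you are essentially reproducing the paper's count of $|U(u)|$, and it becomes simpler to work with the valley-vertex set $U$ directly, as the paper does, rather than with the full partition of $V(C)$.
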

\begin{proof}
%Let $v\in V$ be a vertex in the support of $\aalpha$. The distance from $v$ to its largest descendant is at most three. 
Suppose for the sake of a contradiction that $H_{\aalpha}^{(0)}$ has a cycle $C\subseteq  F_{\aalpha}^{(0)}$. Let us partition the cycle $C$ into index-increasing  paths $P_1, \dots, P_{s}$ and let $v_1, \dots, v_s$ be the vertices with the smallest index on each of these paths. Notice that the set $U:=\{v_1,\dots, v_s\}$ has at least $\frac{s}{2}$ vertices. Since the length of each  path $P_{i}$ is at most $k$, we have that $g\leq |C|\leq ks$, thus $|U|\geq \frac{|C|}{2k}$. 

Let $u\in V$ be a vertex in the support of $\aalpha$ and let $U(u)$ be the set of descendants of $u$ that lie in $U$. Since $G$ has girth $g>2k$ and has no index-increasing path of length larger than $k$,  any pair of vertices $v_i, v_j\in U(u)$ lie at distance at least $g-2k$ in $C$. In particular, $|U(u)|\leq \frac{|C|}{g-2k}$.
 
Now, take any set of $t$ vertices $u_1, \dots, u_t$ in the support of $\aalpha$ such that 
$$
U=\bigcup_{i=1}^t U(u_t).
$$
Then, 
$$
\frac{|C|}{2k}\leq |U|\leq \sum_{i=1}^{t}|U(u_i)|\leq t \frac{|C|}{g-2k} \quad \Longrightarrow \quad \frac{g}{2k}-1\leq t\leq |\aalpha|.
$$
\end{proof}
From now on, we will assume that the vertices of $G$ are ordered as in Lemma \ref{Lem:Ordering}, so that the descendant graphs of monomials of low degree are always sub-forests of $G$. The following lemma is the core of our argument.
\begin{lemma}\label{Lem:Core1}
Let $x^{\aalpha}$ be a monomial and let $H=(U,F)$ be its descendant graph. Suppose that no pair of vertices in different connected components of $H$ have common or adjacent parents in $G$. Then, for every sub-forest $H'=(U',F')$ of $G$ containing $H$, the monomial $x^{\aalpha}$ is reducible modulo $\cI_{F'}$ if and only if it is reducible modulo $\cI_{F}$.
\end{lemma}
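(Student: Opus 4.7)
The direction where reducibility modulo $\cI_F$ implies reducibility modulo $\cI_{F'}$ is immediate from $\cI_F \subseteq \cI_{F'}$, so the substance lies in the converse. My plan is to (i) extract a structural decomposition of $H'$ from the separation hypothesis, (ii) use this to factor the quotient rings and reduce the problem one component at a time, and (iii) settle the resulting tree-extension sub-claim via an averaging argument combined with an elimination-ideal identity.

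For (i), I would show that distinct connected components $C_1,C_2$ of $H$ lie in distinct connected components of $H'$. Since $F$ already contains every $G$-edge with both endpoints in $U$, any hypothetical path in $H'$ between $C_1$ and $C_2$ must leave $U$, and each exit from $U$ goes to a \emph{parent} (in the index ordering) of a $U$-vertex, because any child of a $U$-vertex lies in $U$ by the definition of the descendant graph. Walking along the path and tracking exits and re-entries, one extracts two vertices outside $U$ that are parents in $G$ of a $C_1$-vertex and a $C_2$-vertex respectively, and that are either identical (for a single exit-and-re-entry) or $G$-adjacent (for two consecutive exits), contradicting the separation hypothesis. Hence the $H'$-components $T'_1,T'_2,\ldots$ containing each $C_i$ have pairwise disjoint vertex sets, and because $H'$ is a forest each $T'_i$ is a tree extending the subtree $C_i$.

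For (ii), the disjointness of the $V(T'_i)$ yields a tensor decomposition $R_{V,k}/\cI_{F'}\cong \bigotimes_i R_{V(T'_i),k}/\cI_{E(T'_i)}$ with additional tensor factors for isolated vertices and $H'$-components disjoint from $U$, and analogously $R_{V,k}/\cI_F$ factors over the $C_i$. Writing $x^{\aalpha}=\prod_i x^{\aalpha^{(i)}}$ where $\aalpha^{(i)}$ denotes the restriction of $\aalpha$ to $V(C_i)$, reducibility modulo either ideal is detected componentwise in the tensor product, so it suffices to prove, for each $i$, that $x^{\aalpha^{(i)}}$ is irreducible modulo $\cI_{E(T'_i)}$ whenever it is irreducible modulo $\cI_{E(C_i)}$.

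For (iii), the starting point is the elimination identity $\cI_{E(T'_i)}\cap R_{V(C_i),k} = \cI_{E(C_i)}$, which holds because every proper $k$-coloring of the subtree $C_i$ extends to the tree $T'_i$ for $k\geq 2$, and because both ideals are radical. To convert this into a leading-monomial equivalence I would induct on $|V(T'_i)\setminus V(C_i)|$, at each step peeling off a leaf $\ell$ of $T'_i$ outside $V(C_i)$ (one exists because $V(C_i)$ is a connected subset of $V(T'_i)$) and applying the averaging operator $\mathrm{Avg}_\ell(g) = (k\,g^{(0)} - g|_{x_\ell = x_u})/(k-1)$, where $u$ is $\ell$'s unique $T'_i$-neighbor and $g^{(0)}$ is the part of $g$ with no $x_\ell$ factor. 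A direct calculation shows this operator sends $\cI_{E(T'_i)}$ into $\cI_{E(T'_i \setminus \{\ell\})}$. The main obstacle, which I expect to be the heart of the proof, is verifying that $\mathrm{Avg}_\ell$ preserves the leading monomial $x^{\aalpha^{(i)}}$: the substitution $x_\ell \mapsto x_u$ promotes a monomial in the GLEX order only when $u<\ell$, while the clean regime $\ell<u$ holds automatically whenever $u \in V(C_i)$ (any extension vertex adjacent to a $U$-vertex must have smaller index, else it would be a descendant of a support vertex and therefore lie in $U$). The general case, where $\ell$ and $u$ are both extension vertices, I would address either by a careful choice of peeling order---for instance, handling each extension branch in a way that reduces every step to the $\ell<u$ case---or by directly constructing a suitable linear functional on $R_{V(T'_i),k}/\cI_{E(T'_i)}$ that witnesses standardness of $x^{\aalpha^{(i)}}$, leveraging the tree structure of $T'_i$.
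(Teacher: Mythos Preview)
Your step (i) is incorrect: distinct connected components $C_1,C_2$ of $H$ can lie in the \emph{same} component of $H'$. The separation hypothesis only forbids that a $C_1$-vertex and a $C_2$-vertex have a common parent or parents that are $G$-adjacent; it says nothing about longer excursions outside $U$. Concretely, take $\supp(\aalpha)=\{10,11\}$ with neither vertex having children, so $H=C_1\sqcup C_2=\{10\}\sqcup\{11\}$ with no edges, and suppose $G$ contains the path $10\text{--}5\text{--}3\text{--}7\text{--}11$ (with $3<5<10$ and $3<7<11$) and no edge $\{5,7\}$. Then the unique parent of $10$ is $5$ and the unique parent of $11$ is $7$; these are neither equal nor adjacent, so the hypothesis holds, yet this path is a sub-forest $H'$ joining $C_1$ to $C_2$. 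Your argument breaks exactly at the clause ``either identical (for a single exit-and-re-entry) or $G$-adjacent (for two consecutive exits)'': when the segment of the path outside $U$ has length at least $3$, the exit parent and re-entry parent need not be adjacent. The paper's Figure~\ref{Fig:CoreLem1} depicts precisely this scenario, with one component $H^\ast$ of $H'\setminus H$ attached to $\ell\ge 2$ distinct components $H_i,H_j,\ldots$ of $H$.

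Since (i) fails, the tensor factorization in (ii) is unavailable and the reduction to a single tree extension in (iii) does not apply. The paper's proof is organized very differently: it removes one component $H^\ast$ of $H'\setminus H$ at a time (explicitly allowing $\ell>1$), starts from a witness $f=\sum r_{u'v'}q_{u'v'}$ with leading monomial $x^{\aalpha}$, and through a sequence of substitutions and degree reductions drives the $x_{u_i^\ast}$-degree of $f$ down to at most $k-3$ for each attachment vertex $u_i^\ast$ while preserving the leading monomial. The key devices are a partial $2$-coloring of $H^\ast$ that gives all $H^\ast$-neighbors of each $u_i^\ast$ the same color $\zeta_i$, together with replacements $x_{u_i^\ast}^{k-2}\mapsto x_{u_i^\ast}^{k-2}-t_i$ for the polynomials $t_i$ defined by $q_{u_i^\ast u_i}(x_{u_i^\ast},x_{u_i})-q_{u_i^\ast u_i}(x_{u_i^\ast},\zeta_i)=(x_{u_i}-\zeta_i)\,t_i$. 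Once every $x_{u_i^\ast}$-degree is at most $k-3$, a root-counting argument (each $u_i^\ast$ has at least $k-2$ available colors under any extension) forces the residual $H^\ast$-contribution to vanish, leaving a witness in $\cI_{F''}$ with $F''=F'\setminus(F^\ast\cup\{u_i^\ast u_i:i\in[\ell]\})$. Note also that your step (iii), even taken on its own, leaves its declared ``main obstacle'' (leading-monomial preservation when both $\ell$ and its neighbor $u$ are extension vertices with $u<\ell$) unresolved; the paper's machinery is built precisely to handle the interaction among several attachment points simultaneously rather than peeling leaves one at a time.
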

\begin{proof}
Let $x^{\aalpha}$, $H$ and $H'$ be as in the statement. Clearly, if $x^{\aalpha}$ is reducible modulo $\cI_{F}$ then it is reducible modulo $\cI_{F'}$ as $F\subseteq F'$. Thus, let us assume that $x^{\aalpha}$ is reducible modulo $\cI_{F'}$.

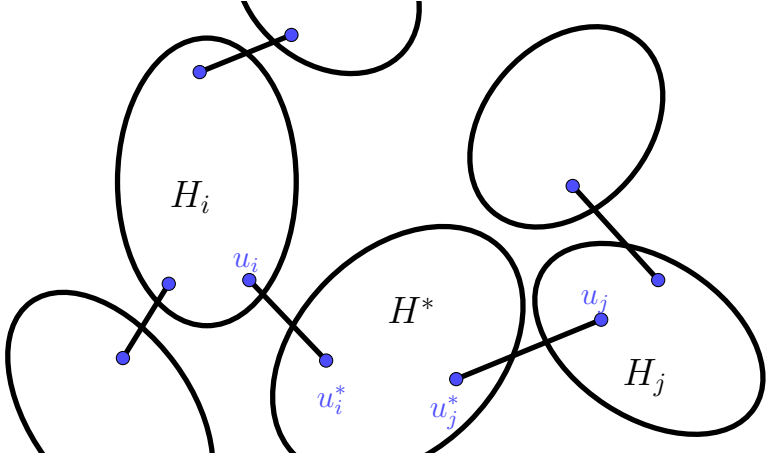
\begin{figure}
\begin{center}
\definecolor{ududff}{rgb}{0.30196078431372547,0.30196078431372547,1.}
\begin{tikzpicture}[line cap=round,line join=round,>=triangle 45,x=1.0cm,y=1.0cm]
\clip(4,-5) rectangle (14.5,1);
\draw [rotate around={90.:(6.8352817009455,-1.4011690205672984)},line width=2.pt] (6.8352817009455,-1.4011690205672984) ellipse (1.9134275536530514cm and 1.1879414981718077cm);
\draw [rotate around={45.:(9.374070463592895,-3.658873196217998)},line width=2.pt] (9.374070463592895,-3.658873196217998) ellipse (1.9417131566328973cm and 1.3305074154777548cm);
\draw [rotate around={-53.767581244552964:(5.55131677931586,-4.446077295155664)},line width=2.pt] (5.55131677931586,-4.446077295155664) ellipse (1.7900503802412233cm and 1.065917615860506cm);
\draw [rotate around={48.551733354820406:(11.61396012877168,-0.6689527278398852)},line width=2.pt] (11.61396012877168,-0.6689527278398852) ellipse (1.4960736484146675cm and 1.0876287792628467cm);
\draw [rotate around={-40.815083874881566:(8.432049048348064,1.2206056186763012)},line width=2.pt] (8.432049048348064,1.2206056186763012) ellipse (1.3522816868789cm and 1.0335210499394984cm);
\draw [line width=2.pt] (7.3994252805404575,-2.707283630559543)-- (8.420678734678361,-3.7779525744137983);
\draw [line width=2.pt] (5.719298630184551,-3.745008914602898)-- (6.328756336686204,-2.7566991202758935);
\draw [line width=2.pt] (7.959467497325762,0.5541386907195727)-- (6.74,0.06);
\draw [line width=2.pt] (12.83512914933898,-2.707283630559543)-- (11.698572885862927,-1.455424557745337);
\draw [rotate around={-33.190117042971686:(12.711590425048103,-3.4896955510684173)},line width=2.pt] (12.711590425048103,-3.4896955510684173) ellipse (1.6770783320545026cm and 1.048491362861788cm);
\draw [line width=2.pt] (10.15022087475062,-4.02503002299555)-- (12.077424973688277,-3.234382187533946);
\begin{scriptsize}
\draw[color=black] (6.61701336003158,-1.6036710268943877) node {{\Large $H_i$}};
\draw[color=black] (9.557234998154415,-3.1108434632430697) node {{\Large $H^\ast$}};
\draw [fill=ududff] (7.3994252805404575,-2.707283630559543) circle (2.5pt);
\draw[color=ududff] (7.358245705776833,-2.476678011883242) node {{\large $u_i$}};
\draw [fill=ududff] (8.420678734678361,-3.7779525744137983) circle (2.5pt);
\draw[color=ududff] (8.5,-4.3) node {{\large $u^\ast_i$}};
\draw [fill=ududff] (5.719298630184551,-3.745008914602898) circle (2.5pt);
\draw [fill=ududff] (6.328756336686204,-2.7566991202758935) circle (2.5pt);
\draw [fill=ududff] (7.959467497325762,0.5541386907195727) circle (2.5pt);
\draw [fill=ududff] (6.74,0.06) circle (2.5pt);
\draw [fill=ududff] (12.83512914933898,-2.707283630559543) circle (2.5pt);
\draw [fill=ududff] (11.698572885862927,-1.455424557745337) circle (2.5pt);
\draw[color=black] (12.662174935331755,-4) node {{\Large $H_{j}$}};
\draw [fill=ududff] (10.15022087475062,-4.02503002299555) circle (2.5pt);
\draw[color=ududff] (10,-4.464987806210247) node {{\large $u^\ast_j$}};
\draw [fill=ududff] (12.077424973688277,-3.234382187533946) circle (2.5pt);
\draw[color=ududff] (12,-3.) node {{\large $u_j$}};
\end{scriptsize}
\end{tikzpicture}
\end{center}
\caption{The graph of Lemma \ref{Lem:Core1}}\label{Fig:CoreLem1}
\end{figure}

Let $H^\ast=(U^\ast, F^\ast)$ be a connected component of the graph $H'\setminus H$. Since $H$ is a forest, there is at most one edge from $H^\ast$ to each component of $H$. Suppose that $H^\ast$ is connected to $\ell\geq 1$ components of $H$ and let $u_1^\ast, \dots, u_{\ell}^\ast\in U^\ast$ and $u_1, \dots, u_\ell\in U$ be such that $u_i u^\ast_i\in F'$ for each $i\in [\ell]$. Then, by our hypothesis on $H$,  for all $i\in [\ell]$ we have $u^\ast_i<u_i$ and no pair of the vertices $u_{i}^\ast, u_{j}^\ast$ or $u_i^{\ast}, u_{j}$ are adjacent for $i\neq j$.

Let $H''=(U'',F'')$ be the graph obtained from $H'$ after the deletion of the graph $H^\ast$.  Our goal is to show that $x^{\aalpha}$ is reducible modulo the ideal $\cI_{F''}$. Thus, by successively repeating this procedure with each of the remaining components of $H'\setminus H$, the reducibility of $x^{\aalpha}$ modulo $\cI_{F}$ follows.

Since $x^{\aalpha}$ is reducible modulo $\cI_{F'}$, it is the leading term of a polynomial $f$ of the form
\begin{equation}\label{Eq:pDef}
f(x)=\sum_{ u'v'\in F''}r_{u'v'}(x)q_{u'v'}(x)+\underbrace{\sum_{i=1}^{\ell}r_{u^\ast_i u_i}(x)q_{u^\ast_i u_i}(x)+\sum_{u'v'\in F^\ast}r_{u'v'}(x)q_{u'v'}(x)}_{:=p(x)},
\end{equation}
for some polynomials $r_{u'v'}$ with $u'v'\in F'$ of degree at most $d$. We will show that it is possible to transform $f$ into a polynomial $\tilde{f}\in \cI_{F''}$ whose leading monomial is $x^{\aalpha}$. We do this by analyzing the degrees of the $x_{u^{\ast}_i}$ variables appearing in $f$ for each $i\in [\ell]$. The reason behind such analysis is motivated by the following claim. 
\begin{claim}\label{Cl:k-3claim}
Let $f\in \cI_{F'}$ be as above. Suppose that for each $i\in [\ell]$ the $x_{u^\ast_i}$-degree of $f$ is at most $k-3$. Then, $x^{\aalpha}$ is reducible modulo $\cI_{F''}$.
\end{claim}  
\begin{proof}
Since $H^*$ is a tree, we can pick a partial coloring $b:=(b_w)_{w\in U^\ast\setminus\{u_1^\ast, \dots, u_\ell^\ast\}}$ of $H^\ast$ that colors all the neighbors in $H^\ast$ of each $u^\ast_i$ with the same color. Indeed, fix a primitive $k$-th root of the unity $\zeta$ and a vertex $v_0\in U^\ast$ of $H^\ast$. For every $w\in U^\ast$ let $d(w)$ be the distance in $H^\ast$ from $w$ to $v_0$. Then, define
$$
b_{w}:=\zeta^{d(w) \mod 2} \quad \quad  \forall w\in U^\ast\setminus\{u_1^\ast, \dots, u_\ell^\ast\}.
$$
Now, if a vertex $u^\ast\in U^\ast$ lies at distance $d(u^\ast)$ to $v_0$, then all of its neighbors satisfy $d(w)\equiv d(u^\ast)+1 \mod 2$ and $b_w$ will be the same for all of them. 

Consider the polynomial $f|_b$ obtained from $f$ after the evaluation of the partial coloring $b$. Notice that the leading term of $f|_b$ is still $x^{\aalpha}$ as no vertex of $H^\ast$ appears in the support of $\aalpha$. We claim that $f|_b\in \cI_{F''}$. Indeed, consider any coloring $a:=(a_{u''})_{u''\in F''}$ of $H''$ where each $a_{u''}$ is a $k$-th root of the unity and let $f|_{a,b}$ be the polynomial obtained from $f|_b$ after the evaluation of $a$. Then, $f|_{a,b}$ is a polynomial containing only $x_{u_{i}^\ast}$ variables and it vanishes on any coloring of $H'$ that agrees with $a$ and $b$. Now, the partial coloring induced by $a$ and $b$ colors the neighbors of each vertex $u_{i}^\ast$ with at most two colors. Thus, at least $k-2$ colors are available for each vertex $u_{i}^\ast$ to extend the partial coloring to a full coloring of $H'$ and obtain a root of $f|_{a,b}$. Since the $x_{u^\ast_i}$-degree of $f|_{a,b}$ is at most $k-3$ for each $i\in[\ell]$, this implies that $f|_{a,b}=0$ and the result follows. 
\renewcommand\qedsymbol{$\blacklozenge$}
\end{proof}

From the above claim, it is enough to reduce the $x_{u_i^\ast}$-degree of the polynomial $f$ for each $i\in [\ell]$. We start with the following simplification.
\begin{claim}\label{Cl:Polyf1}
We may assume that the polynomial $p$, defined in equation \eqref{Eq:pDef}, and the polynomials $r_{u'v'}$ with $u'v'\in F''$ have $x_{u^\ast_i}$-degree at most $k-2$ for every $i\in [\ell]$. In particular, this property holds for $f$ as well.
\end{claim}
\begin{proof}
Let us fix any index $i\in [\ell]$. If a term of the form  $c\cdot x_{u^\ast_i}^{k-1}x^{\bbeta}$ appears in some $r_{u'v'}$ with $u',v\in F''$, then we replace such a term by the polynomial $c\cdot[x_{u^\ast_i}^{k-1}-q_{u^\ast_i u_i}(x)]\cdot x^{\bbeta}$ in $r_{u'v'}$ and add the polynomial $c\cdot q_{u'v'}(x)\cdot x^{\bbeta}$ to $r_{u_i^\ast u_i}$. This way, we obtain a representation of $f$ such that all the $r_{u'v'}$ have $x_{u^\ast_i}$ degree at most $k-2$. 

Next, we replace any appearance of $x_{u_i^\ast}^{k-1}$ in the terms of $p(x)$ with the polynomial $x_{u^\ast_i}^{k-1}-q_{u^\ast_i u_i}(x)$. The resulting polynomial is still in the ideal generated by the polynomials $q_{u^\ast_i u_i}(x)$ with $i\in [\ell]$ and $q_{u'v'}$ with $u'v'\in F^\ast$. Moreover, since the leading term of $q_{u^\ast_i u_i}(x)$ is $x_{u^\ast_i}^{k-1}$, then the new monomials appearing are smaller than $x^{\aalpha}$ in the GLEX order. Indeed, no monomial of $p$ of the form $x_{u^\ast_i}^{k-1}\cdot x^{\bbeta}$ can cancel out with a term of $\sum_{ u'v'\in F''}r_{u'v'}(x)q_{u'v'}(x)$ as we have reduced the $x_{u_i^\ast}$-degree of each $r_{u'v'}$ with $\{u',v'\}\in F''$. Thus, such monomials would appear in $f$ as well, implying that $x_{u^\ast_i}^{k-1}\cdot x^{\bbeta}\preceq x^{\aalpha}$ and as a consequence, every term in $[x_{u^\ast_i}^{k-1}-q_{u^\ast_i u_i}(x)]\cdot x^{\bbeta}$ is smaller than $x^{\aalpha}$ in the GLEX order as well.
\renewcommand\qedsymbol{$\blacklozenge$}
\end{proof}

Our next goal is then to further reduce the degree of the $x_{u_i^\ast}$-variables. As in the proof of Claim \ref{Cl:k-3claim}, we can get rid of many of the terms involving some of the vertices of $H^\ast$ by using a partial coloring $b=(b_w)_{w\in U^\ast\setminus\{u_1^\ast, \dots, u_\ell^\ast\}}$  that colors all the neighbors in $H^\ast$ of each $u^\ast_i$ with the same color. Let us denote the color used by the neighbors of $u_i^\ast$ by $\zeta_{i}\in \K$ for each $i\in [\ell]$. Then, by evaluating the partial coloring $b$ on the polynomial $f$, we obtain a new polynomial $\tilde{f}$ whose leading monomial is still $x^{\aalpha}$. We can write $\tilde{f}$ as  
\begin{equation*}
\begin{aligned}
\tilde{f}(x)&=\sum_{ u'v'\in F''}\tilde{r}_{u'v'}(x)q_{u'v'}(x)+\sum_{i=1}^{\ell}\tilde{r}_{u^\ast_i u_i}(x)q_{u^\ast_i u_i}(x)+\sum_{i=1}^{\ell}\tilde{r}_i(x)q_{u^\ast_i u_i}(x_{u^\ast_i}, \zeta_i),\\
&=\sum_{ u'v'\in F''}\tilde{r}_{u'v'}(x)q_{u'v'}(x)+\underbrace{\sum_{i=1}^{\ell}\left[\tilde{r}_{u^\ast_i u_i}(x)q_{u^\ast_i u_i}(x)+\tilde{r}_i(x)q_{u^\ast_i u_i}(x_{u^\ast_i}, \zeta_i)\right]}_{:=\tilde{p}(x)},
\end{aligned}
\end{equation*}
for some polynomials $\tilde{r}_{u'v'}$ and $\tilde{r}_i$ of degree at most $d$. Notice that we have used the fact that all the neighbors in $H^\ast$ of each $u_i^\ast$ have been assigned the color $\zeta_i$, so that if $w\in U^\ast$ is a neighbor of $u^\ast_i$, then 
$$
q_{u^\ast_i w}(x)|_b=\sum_{r=0}^{k-1}x_{u^\ast_i}^r \zeta_i^{k-1-r}=q_{u^\ast_i u_i}(x_{u^\ast_i}, \zeta_i).
$$
Now, for each $i\in [\ell]$ let us define the polynomial $t_i(x_{u^\ast_i}, x_{u_i})$ given by the equation
\begin{equation}
q_{u^\ast_i u_i}(x_{u^\ast_i}, x_{u_i})-q_{u^\ast_i u_i}(x_{u^\ast_i}, \zeta_i)=:(x_{u_i}-\zeta_i)\cdot t_i(x_{u^\ast_i}, x_{u_i}).
\end{equation}
Notice that the leading monomial of each $t_i(x)$ is $x_{u^\ast_i}^{k-2}$. Moreover, for any $k$-th root of the unity $\zeta\neq \zeta_i$ we have 
\begin{equation}\label{Eq:Polyt}
\langle q_{u^\ast_i u_i}(x_{u^\ast_i}, \zeta_i),q_{u^\ast_i u_i}(x_{u^\ast_i}, \zeta)\rangle=\langle t(x_{u^\ast_i}, \zeta)\rangle. 
\end{equation}
We will successively reduce the $x_{u^\ast_i}$-degree of $\tilde{f}$ for each $i\in [\ell]$ as follows. First, set $f^{(0)}:=\tilde{f}$, $p^{(0)}:=\tilde{p}$ and $r_{u'v'}^{(0)}:= r_{u'v'}$ for $\{u'v'\}\in F''$. Then, for each $i\in [\ell]$ and $\{u'v'\}\in F''$ write
\begin{align*}
p^{(i-1)}(x)&=x_{u^\ast_{i}}^{k-2}s^{(i-1)}(x)+\text{other terms with $x_{u^\ast_i}$-degree $< k-2$,}\\
r^{(i-1)}_{u'v'}(x)&=x_{u^\ast_{i}}^{k-2}r^{(i-1,0)}_{u'v'}(x)+\text{other terms with $x_{u^\ast_i}$-degree $< k-2$,}
\end{align*}
and define the polynomials 
\begin{align*}
p^{(i)}(x)&:=p^{(i-1)}(x)-t_i(x)s^{(i-1)}(x),\\
r^{(i)}_{u',v'}(x)&:=r^{(i-1)}_{u'v'}(x)-t_i(x)r_{u'v'}^{(i-1,0)}(x), \\
f^{(i)}(x)&:=\sum_{u'v'\in F''}r^{(i)}_{u',v'}(x)q_{u'v'}(x)+p^{(i)}(x).
\end{align*}
Notice that the degree of each $r_{u'v'}^{(i)}$ is at most $d$. Moreover, we have the following:
\begin{claim}
For every $i\in [\ell]$, the leading monomial of $f^{(i)}$ is $x^{\aalpha}$.
\end{claim}
\begin{proof}
We prove this by induction on $i$ with the case $i=0$ being trivial. Now, suppose that the leading term of $f^{(i-1)}$ is $x^{\aalpha}$. Since the polynomials $q_{u'v'}$ are free of $x_{u^\ast_i}$-variables for $\{u'v'\}\in F''$,  we can write 
\begin{align*}
f^{(i-1)}(x)&=\sum_{u'v'\in F''}r^{(i-1)}_{u'v'}(x)q_{u'v'}(x)+p^{(i-1)}(x), \\
&=\sum_{u'v'\in F''}\left(x_{u^\ast_{i}}^{k-2}r^{(i-1,0)}_{u'v'}(x)+\cdots\right)q_{u'v'}(x)+\left(x_{u^\ast_{i}}^{k-2}s^{(i-1)}(x)+\cdots\right), \\
&=x_{u^\ast_i}^{k-2}\left(\sum_{u'v'\in F''}r_{u'v'}^{(i-1,0)}(x)q_{u'v'}(x)+s^{(i-1)}(x)\right)+\cdots, 
\end{align*}
where the three dots consist of terms with $x_{u^\ast_i}$-degree less than $k-2$, all of them smaller than $x^{\aalpha}$ in GLEX order. However, by the definition of $f^{(i)}$ we have 
\begin{align*}
f^{(i)}(x)&=f^{(i-1)}(x)-t_i(x)\left(\sum_{u'v'\in F''}r_{u'v'}^{(i-1,0)}(x)q_{u'v'}(x)+s^{(i-1)}(x)\right), \\
&=(x_{u^\ast_i}^{k-2}-t_i(x))\left(\sum_{u'v'\in F''}r_{u'v'}^{(i-1,0)}(x)q_{u'v'}(x)+s^{(i-1)}(x)\right)+\cdots.
\end{align*}
In other words, $f^{(i)}$ is obtained by replacing any appearance of the monomial $x_{u_i^\ast}^{k-2}$ in $f^{(i)}$ with the polynomial $(x_{u_i^\ast}^{k-2}-t_i(x))$. This operation does not affect $x^{\aalpha}$ as no vertex in $H^\ast$ is in the support of $\aalpha$. Moreover, since the leading term of $t_i$ is precisely  $x_{u_i^\ast}^{k-2}$, the new monomials appearing in $f^{(i)}$ are smaller than $x^{\aalpha}$ in GLEX order. 
\renewcommand\qedsymbol{$\blacklozenge$} 
\end{proof}
\begin{claim}
$p^{(\ell)}(x)=0$.
\end{claim}
\begin{proof}
Let $a:=(a_{w})_{w\in U''}$ be any sequence with $a_{w}^k=1$ for all $w\in U''$ and for every $i\in [\ell]$ let $p^{(i)}|_a$ be the polynomial obtained after the evaluation $x_w=a_w$ for all $w \in U''$. Let us first show that for every $i\in \{0,1,\dots, \ell-1\}$ we have 
\begin{equation}\label{Eq:p=0mod}
p^{(i)}|_{a}\in \langle q_{u_j^\ast u_j}(x_{u_j^\ast}, a_{u_j}), q_{u_j^\ast u_j}(x_{u^\ast_j}, \zeta_j): \ j>i\rangle.
\end{equation}
Indeed, for $i=0$ the statement holds from the definition of the polynomial $p^{(0)}$. Thus, suppose that the statement holds for $p^{(i-1)}$. In particular, from the definition of $p^{(i)}$ we have that 
$$
p^{(i)}|_{a}\in \langle t_i(x_{u^\ast_i}, a_{u_i}), q_{u_j^\ast u_j}(x_{u_j^\ast}, a_{u_j}), q_{u_j^\ast u_j}(x_{u^\ast_j}, \zeta_j): \ j\geq i\rangle
$$ 
and the $x_{u_i^\ast}$-degree of $p^{(i)}|_a$ is at most $k-3$. Let $c=(c_{u^\ast_j})_{j>i}$ be any vanishing point of the ideal described in equation \eqref{Eq:p=0mod}, in other words each $c_{u^\ast_j}$ is any root of the unity different from $a_{u_i}$ and $\zeta_i$. Let $p^{(i)}|_{a,c}$ be the polynomial obtained after the evaluation by $c$, so that 
$$
p^{(i)}|_{a,c}\in \langle t_i(x_{u^\ast_i}, a_{u_i}), q_{u_i^\ast u_j}(x_{u_i^\ast}, a_{u_i}), q_{u_i^\ast u_i}(x_{u^\ast_i}, \zeta_i)\rangle.
$$
We see that $p^{(i)}|_{a,c}$ is a polynomial of degree at most $k-3$ with at least $k-2$ roots, namely any $k$-th root of the unity $\zeta$ different from $\zeta_i$ and $a_{u_i}$ makes the polynomials $t_i(x_{u^\ast_i}, a_{u_i}), q_{u_i^\ast u_j}(x_{u_i^\ast}, a_{u_i})$ and  $q_{u_i^\ast u_i}(x_{u^\ast_i}, \zeta_i)$ vanish. Since the point $c$ was arbitrary, the equation \eqref{Eq:p=0mod} is proven for $i$. 

From the above and by the definition of $p^{(\ell)}$ we conclude that 
$$
p^{(\ell)}|_a(x)\in \langle t_\ell(x_{u^\ast_\ell}, a_{u_\ell}), q_{u_\ell^\ast u_\ell}(x_{u_\ell^\ast}, a_{u_\ell}), q_{u_\ell^\ast u_\ell}(x_{u^\ast_\ell}, \zeta_\ell)\rangle.
$$
Since the $x_{u^\ast_\ell}$-degree of $p^{(\ell)}$ is at most $k-3$, via a similar argument, we conclude that $p^{(\ell)}|_a$ vanishes for every possible $a$ and the result follows. 
\renewcommand\qedsymbol{$\blacklozenge$} 
\end{proof}
The claims above show that the polynomial 
$$
f^{(\ell)}(x)=\sum_{u'v'\in F''}r^{(\ell)}_{u',v'}(x)q_{u'v'}(x)\in \cI_{F''}
$$
has leading monomial $x^{\aalpha}$ and the result follows.

\renewcommand\qedsymbol{$\square$}
\end{proof}

\begin{remark}\label{Rem:Core1}
We have shown an even stronger result. Under the hypothesis of Lemma \ref{Lem:Core1}, if $x^{\aalpha}$ is the leading term of a polynomial of the form $\sum_{uv\in F'} r_{u'v'} q_{u'v'}$ where the polynomials $r_{u'v'}$ all have degree at most $d$, then $x^{\aalpha}$ is the leading term of polynomial of the form $\sum_{uv\in F_{\aalpha}^{(0)}} \tilde{r}_{u'v'} q_{u'v'}$ where each $\tilde{r}_{u'v'}$ has degree at most $d$ as well.
\end{remark}

As the following example shows, Lemma \ref{Lem:Core1} might not be true when the connected components of the descendant graph of $x^{\aalpha}$ have common or adjacent parents. So, this assumption cannot be relaxed without changing the rest of the statement.
\begin{example}\label{Ex1}
Let $k=3$ and consider the tree $G=(V,E)$ depicted in Figure \ref{Fig:Ex1}. Consider the monomial $x^{\aalpha}:=x_5^2x_6x_7x_8^2x_9$, we claim that $x^{\aalpha}$ is reducible modulo $\cI_{E}$, but it is irreducible modulo $\cI_{F}$ for any proper subset of edges $F\subseteq E$. Indeed, consider the polynomials
\begin{align*}
f_1(x)&=(x_8-x_{9})(x_8-x_{10})(x_9-x_{10}),\\
f_2(x)&=(x_5-x_7)(x_6-x_7)(x_{7}-x_{8}),\\
f_3(x)&=(x_5-x_6)
\end{align*}
\begin{figure}
\begin{center}
\definecolor{ududff}{rgb}{0.30196078431372547,0.30196078431372547,1.}
\begin{tikzpicture}[line cap=round,line join=round,>=triangle 45,x=1.0cm,y=1.0cm]
\clip(0.5,-1.6) rectangle (11.52710711455958,3.600262576981546);
\draw [line width=2.pt] (1.,3.)-- (2.,1.);
\draw [line width=2.pt] (2.,1.)-- (3.,3.);
\draw [line width=2.pt] (2.,1.)-- (6.,-1.);
\draw [line width=2.pt] (5.,3.)-- (6.,1.);
\draw [line width=2.pt] (7.,3.)-- (6.,1.);
\draw [line width=2.pt] (9.,3.)-- (10.,1.);
\draw [line width=2.pt] (11.,3.)-- (10.,1.);
\draw [line width=2.pt] (6.,1.)-- (6.,-1.);
\draw [line width=2.pt] (10.,1.)-- (6.,-1.);
\begin{scriptsize}
\draw [fill=ududff] (1.,3.) circle (2.5pt);
\draw[color=ududff] (1.1429146508838257,3.35181619611292) node {5};
\draw [fill=ududff] (3.,3.) circle (2.5pt);
\draw[color=ududff] (3.1400787576315667,3.35181619611292) node {6};
\draw [fill=ududff] (5.,3.) circle (2.5pt);
\draw[color=ududff] (5.137242864379308,3.35181619611292) node {7};
\draw [fill=ududff] (7.,3.) circle (2.5pt);
\draw[color=ududff] (7.134406971127049,3.35181619611292) node {8};
\draw [fill=ududff] (9.,3.) circle (2.5pt);
\draw[color=ududff] (9.13157107787479,3.35181619611292) node {9};
\draw [fill=ududff] (11.,3.) circle (2.5pt);
\draw[color=ududff] (11.186345687701793,3.35181619611292) node {10};
\draw [fill=ududff] (2.,1.) circle (2.5pt);
\draw[color=ududff] (2.4487527206804254,1.1242100770481311) node {2};
\draw [fill=ududff] (6.,1.) circle (2.5pt);
\draw[color=ududff] (6.423877433149488,0.8553610626782429) node {3};
\draw [fill=ududff] (10.,1.) circle (2.5pt);
\draw[color=ududff] (10.187763634327922,0.8745645637046633) node {4};
\draw [fill=ududff] (6.,-1.) circle (2.5pt);
\draw[color=ududff] (5.982196909541814,-1.4682625615186493) node {1};
\end{scriptsize}
\end{tikzpicture}
\end{center}
\caption{The graph of Example \ref{Ex1}}\label{Fig:Ex1}
\end{figure}
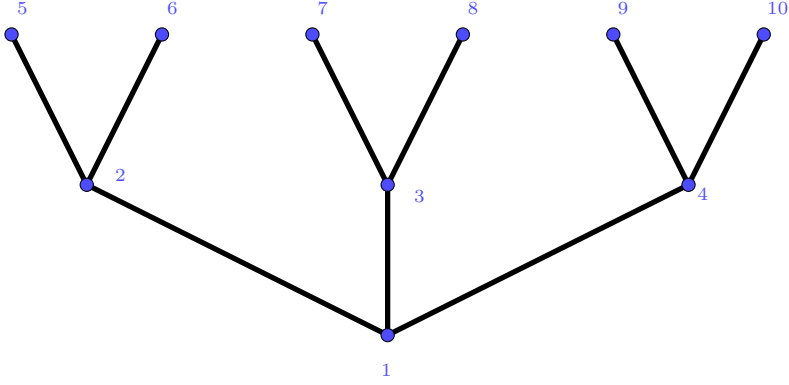
Notice that the leading term of the product $f:=f_1f_2f_3$ is $x^{\aalpha}$. We claim that $f$ vanishes on all possible colorings of $G$. Indeed, let $a:=(a_v)_{v\in V}$ be any coloring of the graph $G$ where each $a_v$ is a $3$-rd root of the unity. If $f_1(a)\neq 0$, then  $a_8,a_9$ and $a_{10}$ are pairwise distinct. Since $4$ is adjacent to both $9$ and $10$, this also implies that $a_4=a_8$ and that $a_3\neq a_4$. If in addition $f_2(a)\neq 0$ then $a_7\neq a_8$ and both $a_5$ and $a_6$ are different from $a_7$. Since $3$ is adjacent to both $7$ and $8$ and $a_4=a_8$, this implies that $a_5,a_6\in \{a_3,a_4\}$. Thus, we conclude that $a_5=a_6$ and $f_3(a)=0$. Otherwise, $a_2, a_3$ and $a_4$ would be pairwise distinct, which cannot happen as all of them have a neighbor in common.  This shows the reducibility of $x^{\aalpha}$ modulo $\cI_E$. 

By the symmetry of the graph $G$ and the way we have enumerated its vertices, it is not hard to see that the irreducibility of $x^{\aalpha}$ modulo $\cI_F$ for any $F\subsetneq E$ follows from the following claim. 
\begin{claim}\label{Cl:Exa}
$x^{\aalpha}$ is irreducible modulo $\cI_{F}$ for every set of the form $F=E\setminus \{u,v\}$ with $$\{u,v\}\in\{\{1,4\},\{2,6\},\{3,8\},\{4,10\}\}.$$
\end{claim}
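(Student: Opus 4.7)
We establish Claim~\ref{Cl:Exa} by checking each of the four subcases separately. Fix $\{u,v\}$ from the list and set $F := E \setminus \{u,v\}$. The plan is to exhibit, for each subcase, a linear functional $\lambda : R_{V,k} \to \K$ that vanishes on $\cI_F$, satisfies $\lambda(x^{\bbeta}) = 0$ for every $\bbeta \in \Z_3^V$ with $x^{\bbeta}\prec x^{\aalpha}$ in GLEX, and has $\lambda(x^{\aalpha}) \neq 0$. Such a $\lambda$ rules out $x^{\aalpha} \in LM(\cI_F)$: a hypothetical $g = x^{\aalpha} + h \in \cI_F$ with $h$ supported on monomials strictly below $x^{\aalpha}$ would yield both $\lambda(g) = 0$ (by membership in $\cI_F$) and $\lambda(g) = \lambda(x^{\aalpha}) \neq 0$ (by the other two properties), a contradiction.

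Since $\cI_F$ is radical in $R_{V,k}$, requiring $\lambda|_{\cI_F}=0$ amounts to writing $\lambda$ as a $\K$-linear combination of point evaluations $\mathrm{ev}_a$ at valid $3$-colorings $a$ of $(V,F)$. In three subcases, $\{u,v\} \in \{\{2,6\},\{3,8\},\{4,10\}\}$, the removed edge is pendant and isolates a leaf in $(V,F)$ (namely $6$, $8$, $10$ respectively); in the fourth subcase $\{1,4\}$ the tree splits into components $\{1,2,3,5,6,7,8\}$ and $\{4,9,10\}$. The central device is the orthogonality identity that $\tfrac{1}{3}\sum_{\zeta^3=1} \zeta^j$ equals $1$ when $j \equiv 0 \pmod 3$ and $0$ otherwise: averaging $\mathrm{ev}_a$ uniformly over the colors of an unconstrained vertex $w$ (one with no incident edge in $F$) kills every monomial whose $x_w$-exponent lies in $\{1,2\}$, while leaving monomials not involving $x_w$ unchanged. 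For the subcase $\{1,4\}$ the analogous device is to fix the color of vertex $4$ (the root of the small component $\{4,9,10\}$) and average only over colors of $9$ and $10$; one checks that the resulting partial sum carries a non-zero scalar weight on the factor $x_9$ and hence the construction reduces to the large component $\{1,2,3,5,6,7,8\}$.

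After this averaging, the task reduces to constructing the remainder of $\lambda$ on the residual subtree, so as to annihilate the $x^{\bbeta}$ with $\bbeta$ supported inside this subtree and $\bbeta \prec \aalpha$, while keeping $\lambda(x^{\aalpha})$ non-zero. On a tree the valid $3$-colorings factor as independent choices at each child given its parent's color, so the expectation of any monomial $x^{\bbeta}$ under a conditionally-uniform distribution on valid colorings decomposes into a product of per-vertex power-sum moments over $\Z_3$. We take $\lambda$ as a small linear combination of such conditionally-uniform averaged evaluations, parametrized by the color at the root of the residual subtree, and solve a finite-dimensional linear system in these parameters for the coefficients that enforce the vanishing on the finitely many surviving $\bbeta \prec \aalpha$ while preserving $\lambda(x^{\aalpha}) \ne 0$.

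The main obstacle is the bookkeeping: one must verify vanishing of $\lambda$ against \emph{every} $\bbeta \in \Z_3^V$ with $\bbeta \prec \aalpha$, in particular against all lex-smaller $\bbeta$ of the same degree $|\aalpha| = 7$. After the averaging step, however, only monomials supported in the small residual subtree survive, so the verification reduces to a finite and manageable case-check. The three pendant-edge subcases share a common template differing only in the relabelling of branches, so one explicit construction handles them uniformly; the subcase $\{1,4\}$ is handled by an analogous but separately executed construction on the split forest.
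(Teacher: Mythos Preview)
Your approach via a separating linear functional is sound in principle, but the execution has a concrete error. You propose to average uniformly over the color of the isolated leaf $w\in\{6,8,10\}$ in the three pendant-edge subcases, noting that this kills every monomial whose $x_w$-exponent lies in $\{1,2\}$. But $\aalpha_6=1$ and $\aalpha_8=2$, so in the subcases $\{2,6\}$ and $\{3,8\}$ your averaging annihilates $x^{\aalpha}$ itself, forcing $\lambda(x^{\aalpha})=0$ and destroying the very quantity you need to keep nonzero. Only in the subcase $\{4,10\}$, where $\aalpha_{10}=0$, does the device behave as you describe. Relatedly, your claim that the three pendant-edge subcases ``share a common template differing only in the relabelling of branches'' is false: the exponent of $\aalpha$ on the disconnected leaf is $1$, $2$, $0$ in the three cases respectively, so $\aalpha$ is not invariant under the branch relabelling and no single construction can handle all three uniformly.

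Even in the subcases where the reduction step is not fatally flawed, what remains is ``solve a finite-dimensional linear system'' on a residual tree with eight or nine vertices, together with the unproved assertion that a solution with $\lambda(x^{\aalpha})\neq 0$ exists. That assertion is the entire content of the claim, merely rephrased in dual language; you never exhibit the solution or give a reason why the system is consistent. The paper sidesteps all of this by a direct Gr\"obner-basis computation (Appendix~\ref{AppendixA}): it lists the leading monomials of each $\cI_{F_i}$ and checks that none divides $x^{\aalpha}$. If you want a by-hand argument along your lines, one repair is to replace the uniform average over the isolated leaf $w$ by the character sum $\sum_{\zeta^3=1}\zeta^{-\aalpha_w}\,\mathrm{ev}_{a_w=\zeta}$, which isolates monomials with $\bbeta_w=\aalpha_w$ rather than $\bbeta_w=0$ and reduces cleanly to the residual tree with the restricted exponent $\aalpha|_{V\setminus\{w\}}$; but you would still have to carry out and verify the residual construction explicitly in each of the four cases.
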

The claim above can be verified with aid of a computer algebra system such as Macaulay2 \cite{M2} by calculating a Gr\"{o}bner basis of each of the four ideals $\cI_{F}$ above. The details have been included in Appendix \ref{AppendixA}.  
\end{example}

The above example motivates the following definition:
\begin{definition}\label{Def:EssentialGraphs}
Let $x^{\aalpha}$ be a monomial in $R_{V,k}$. The \textbf{essential graph} of $x^{\aalpha}$ is the subgraph $H_{\aalpha}:=(U_{\aalpha}, F_{\aalpha})$ of $G$ constructed as follows:
\begin{enumerate}
\item Initially, set $H_{\aalpha}:=H_{\aalpha}^{(0)}$ to be the descendant graph of $x^{\aalpha}$.
\item Let $U^\ast \subseteq U_{\aalpha}$ be set of parents of the vertices in $U_{\aalpha}$. If a pair of vertices $u,v\in U_{\aalpha}$ with parents $u^\ast, v^\ast\in U^\ast$ satisfies either $u^\ast=v^\ast$ or $u^\ast v^\ast\in E$, then we add the vertices $u^\ast,v^\ast$ to $U_{\aalpha}$ along with all of their descendants. Then, we update $F_{\aalpha}$ to be the graph induced by this new set $U_{\aalpha}$.
\item We repeat step 2. until no pair of connected components of $H_{\aalpha}$ have  common or adjacent parents.
\end{enumerate} 
\end{definition}

\begin{example}
Consider the graph $G=(V,E)$ of Example \ref{Ex1} and the monomial $x^{\aalpha}:=x_5^2x_6x_7x_8^2x_9$. Then, the descendant graph $H_{\aalpha}^0$ consist of all the leaves of the tree $G$, whereas the essential graph $H_{\aalpha}$ is the entire graph $G$. Recall that $x^{\aalpha}$ is irreducible modulo $\cI_{F}$ for every subset of edges $F\subseteq E$, while being reducible modulo $\cI_{E}$. 
\end{example}

\begin{corollary}\label{Cor:H1Forest-k-version}
Let $G$ be as above and let $\aalpha$ be a multi-index of degree at most $d$. Suppose that $2d<\frac{g}{2k}-1$, then the essential graph of $x^{\aalpha}$ is a forest. 
\end{corollary}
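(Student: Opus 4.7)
The plan is to show that the essential graph $H_\aalpha$ is itself the descendant graph of some set $S \subseteq V$ with $|S| \leq 2d - 1$, and then invoke Lemma \ref{Lem:H0Forest-k-version} applied to a $0/1$ multi-index supported on $S$. The core idea is to maintain the invariant ``the current $H_\aalpha$ is the descendant graph of $S$'' throughout the iterative construction of Definition \ref{Def:EssentialGraphs}. The base case is immediate since $H_\aalpha^{(0)}$ is by definition the descendant graph of $\supp(\aalpha)$, so we start with $S := \supp(\aalpha)$ and $|S| \leq |\aalpha| \leq d$.

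The main step is to analyze how $S$ grows during a single merge operation. If the two triggering components share a common parent $u^* = v^*$, we simply set $S := S \cup \{u^*\}$, adding at most one vertex. The delicate case is when the parents are distinct but adjacent, $u^* v^* \in E$: here a naive count adds two new vertices. The key observation I would emphasize is that, under the ordering provided by Lemma \ref{Lem:Ordering}, one of $u^*, v^*$ is a parent of the other; say $u^* < v^*$. Then $v^*$ is itself a descendant of $u^*$, so every descendant of $v^*$ is already a descendant of $u^*$, and appending $u^*$ together with all its descendants subsumes appending $v^*$ with all its descendants. Hence in either case $|S|$ grows by at most one per merge step, and the invariant is preserved because $U_\aalpha$ after the update equals the union of descendants of the vertices in the enlarged $S$.

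Next I would bound the total number of merges. Each merge attaches a common or adjacent parent to two vertices lying in distinct connected components of the current essential graph, forcing those two components to become joined; in particular, the number of connected components strictly decreases by at least one per merge. Since $H_\aalpha^{(0)}$ has at most $|\supp(\aalpha)| \leq d$ connected components (each such component must contain a seed of $\supp(\aalpha)$ because every vertex of $U_\aalpha^{(0)}$ is reached from some seed by an index-increasing path lying entirely in $U_\aalpha^{(0)}$), the procedure performs at most $d - 1$ merges, and so the final seed set satisfies $|S| \leq d + (d - 1) = 2d - 1$.

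Finally, viewing $S$ as the support of a $0/1$ multi-index $\aalpha'$, we have $|\aalpha'| = |S| \leq 2d - 1 < \frac{g}{2k} - 1$ by the hypothesis $2d < \frac{g}{2k} - 1$. Applying Lemma \ref{Lem:H0Forest-k-version} to $\aalpha'$ then yields that its descendant graph—which by the invariant equals $H_\aalpha$—is a forest, completing the proof. The main conceptual obstacle is precisely the adjacent-parent case: without exploiting the ordering from Lemma \ref{Lem:Ordering} to collapse the two added parents into one effective seed, the naive accounting would only give $|S| \leq 3d - 2$, which is too weak to apply Lemma \ref{Lem:H0Forest-k-version} under the stated hypothesis.
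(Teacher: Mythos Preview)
Your proof is correct and follows essentially the same approach as the paper's: both maintain the invariant that the current essential graph equals the descendant graph of a small seed (a set $S$ for you, a multi-index $\bbeta$ for the paper), add only the smaller parent $u^*$ at each merge step, and bound the number of merges by the drop in the number of connected components. Your write-up is slightly more detailed---you explicitly justify why adding only $u^*$ suffices in the adjacent-parent case and why each component of $H_\aalpha^{(0)}$ contains a seed---and yields the marginally tighter bound $|S|\le 2d-1$ versus the paper's $2d$, but the argument is the same.
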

\begin{proof}
Set initially $x^{\bbeta}:=x^{\aalpha}$.  At each step of the construction of $H_{\aalpha}$, if parents $u^\ast$ and $v^\ast$ with $u^\ast\leq v^\ast$ are added to the graph, then update $x^{\bbeta}:=x^{\bbeta}x_{u^\ast}$. Thus, at the end of the construction of $H_{\aalpha}$, the descendant graph of $x^{\bbeta}$ equals $H_{\aalpha}$.

Now, at each step in the construction of $H_{\aalpha}$ we are reducing its number of connected components. Thus,  the degree of $x^{\bbeta}$ at the end of the construction is at most $2d$. By Lemma \ref{Lem:H0Forest-k-version}, $H_{\aalpha}=H_{\bbeta}^{(0)}$ is a forest.
\end{proof}
\begin{corollary}\label{Cor:Core2}
Let $x^{\aalpha}$ be a monomial whose essential graph $H_{\aalpha}$ is a forest and let $H'=(U',F')$ be a larger forest containing $H_{\aalpha}$. Then, $x^{\aalpha}$ is reducible modulo $\cI_{F_{\aalpha}}$ if and only if it is reducible modulo $\cI_{F'}$.
\end{corollary}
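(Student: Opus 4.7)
The plan is to reduce this corollary directly to Lemma~\ref{Lem:Core1}. The forward direction ($\cI_{F_{\aalpha}}$-reducibility implies $\cI_{F'}$-reducibility) is immediate from $F_{\aalpha} \subseteq F'$, so all the content lies in the converse.

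The key observation I would make is that the proof of Lemma~\ref{Lem:Core1} never truly uses the fact that $H$ is literally the descendant graph of $x^{\aalpha}$; it only uses four properties: (i) $\supp(\aalpha) \subseteq V(H)$; (ii) $H$ is a forest (so each component of $H' \setminus H$ attaches to $H$ via at most one edge per component of $H$); (iii) no two connected components of $H$ have common or adjacent parents in $G$; and (iv) every descendant in $G$ of a vertex of $H$ also lies in $H$ (this is what justifies the inequality $u_i^\ast < u_i$ when a component $H^\ast$ of $H' \setminus H$ is peeled off, since the attaching neighbour $u_i^\ast$ cannot be a descendant of $u_i$ and must therefore be a parent). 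I would then verify that the essential graph $H_{\aalpha}$ meets all four conditions: (i) because $H_{\aalpha} \supseteq H_{\aalpha}^{(0)}$ and the latter contains $\supp(\aalpha)$ by definition; (ii) by the hypothesis of the corollary; (iii) because the iterative construction in Definition~\ref{Def:EssentialGraphs} terminates precisely when no two components share or have adjacent parents; and (iv) because $H_{\aalpha}^{(0)}$ is descendant-closed by construction, and every subsequent step of Definition~\ref{Def:EssentialGraphs} adds each new parent vertex \emph{together with all of its descendants}, which preserves descendant-closedness.

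With (i)--(iv) in hand for $H = H_{\aalpha}$, the entire peeling argument of Lemma~\ref{Lem:Core1}---fixing a two-coloring $b$ of each component $H^\ast$ of $H' \setminus H_{\aalpha}$ that assigns all neighbours of each $u_i^\ast$ the same colour $\zeta_i$, manipulating the $q_{u_i^\ast u_i}$ via the auxiliary polynomials $t_i$, and iteratively driving down the $x_{u_i^\ast}$-degree until Claim~\ref{Cl:k-3claim} applies---runs verbatim with $H$ replaced by $H_{\aalpha}$, producing a certificate for $x^{\aalpha}$ in $\cI_{F_{\aalpha}}$ out of one in $\cI_{F'}$. The main obstacle is essentially bookkeeping rather than mathematical: Lemma~\ref{Lem:Core1} was phrased specifically for descendant graphs, so one must either state a slightly generalized version and re-prove it, or re-read the original proof and explicitly check that no step appeals to any feature of the descendant graph beyond the four properties listed above. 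Once this inspection is carried out, the converse implication follows with no additional combinatorial input.
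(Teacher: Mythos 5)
Your proof is correct and is precisely what the paper implicitly has in mind: the corollary is stated without a separate proof, so it is treated as an immediate consequence of Lemma~\ref{Lem:Core1}, and your careful enumeration of the properties (i)--(iv) that the proof of that lemma actually relies on---and the verification that the essential graph $H_{\aalpha}$ satisfies them, in particular descendant-closedness---is exactly the diligence required to make that ``immediate'' honest. This checking is not merely bookkeeping: since $H_{\aalpha}$ is in general the descendant graph of a different monomial $x^{\bbeta}$ rather than of $x^{\aalpha}$ itself, the corollary is not a literal instance of the lemma's statement, and the observation that its proof uses only (i)--(iv) is the bridge.
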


%Main Theorem
\section{Main Theorem}

Before going into the proof of Theorem \ref{Thm:Main}, let us recall some basic notation. For every subset of edges $F\subseteq E$ let us denote by
\begin{equation}\label{Eq:MatrixF}
\hat{A}_{F,d} \lambda =\hat{c}_{F,d}
\end{equation}
the system of linear equations \eqref{DCOL} for the graph induced by $F$. Notice that the system \eqref{Eq:MatrixF} has a solution for every $d\geq 0$ whenever $F$ is $k$-colorable. In particular, this holds whenever $F$ is a forest. 

By looking at the system  \eqref{DCOL}, one sees that the columns of $\hat{A}_{F,d}$ can be indexed by monomials $x^{\aalpha}$ with $\aalpha\in \Z_k^V$ of degree at most $d+k-1$. We will assume that these columns are ordered using the GLEX order from left to right,  where the largest monomials are the left most columns in $\tilde{A}_{F,d}$.

As it is custom in linear programming, let us call a set of monomials $\cB$ a \textit{basis} of  $\hat{A}_{F,d}$ if the corresponding columns of $\hat{A}_{F,d}$ form a basis for its column space. If the system \eqref{Eq:MatrixF} has a solution,  every basis $\cB$ induces a corresponding \textit{basic solution}, namely by setting $\lambda_{\aalpha}=0$ for all $x^{\aalpha}\notin \cB$ and solving the resultant system of equations with a unique solution.

\begin{lemma}
For every set of edges $F\subseteq E$, let $\cB_{F,d}$ be the set of leading monomials of polynomials of the form $\sum_{uv\in F}r_{uv}q_{uv}$ where each $r_{uv}$ has degree at most $d$. Then, $\cB_{F,d}\cup\{1\}$ is a basis for the matrix $\hat{A}_{F,d}$.
\end{lemma}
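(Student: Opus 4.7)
The plan is to view the rows of $\hat{A}_{F,d}$ as polynomials in $R_{V,k}$ and identify its rank with the dimension of $W := N(F,d) + \K\cdot 1$. Each row coming from a constraint indexed by $(\aalpha, uv)$ with $|\aalpha|\le d$ and $uv\in F$ is, after reduction modulo $\cI_{V,k}$, the coefficient vector (in the monomial basis of $R_{V,k}$) of the polynomial $x^{\aalpha}q_{uv}(x)$, and the final row encoding the constraint $\lambda_0=1$ is the indicator of the column corresponding to the monomial $1$. Hence the row space of $\hat{A}_{F,d}$ is exactly $W$, so that $\rank(\hat{A}_{F,d}) = \dim W$.

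Next, by Gaussian elimination with respect to the GLEX ordering, I would extract a reduced basis $\{f_{\bbeta} : \bbeta\in \cB_{F,d}\}$ of $N(F,d)$ whose members have leading coefficient $1$ and leading monomial $x^{\bbeta}$, with no monomial of $f_{\bbeta}-x^{\bbeta}$ lying in $\cB_{F,d}$. This yields $\dim N(F,d) = |\cB_{F,d}|$. Since $1$ is the smallest monomial in the GLEX order, $1\in \cB_{F,d}$ is equivalent to $1\in N(F,d)$, and therefore $\dim W = |\cB_{F,d}\cup\{1\}|$ in both possible cases, matching $\rank(\hat{A}_{F,d})$.

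It then remains to establish linear independence of the columns indexed by $\cB_{F,d}\cup\{1\}$, for which I would exhibit an invertible square submatrix. Take the rows $\{f_{\bbeta}\}_{\bbeta\in \cB_{F,d}}$, augmented by the row encoding $\lambda_0=1$ when $1\notin \cB_{F,d}$, and restrict these to the columns indexed by $\cB_{F,d}\cup\{1\}$. By the reducedness of the $f_{\bbeta}$, each such row carries a $1$ in its own pivot column $\bbeta$ and $0$ in every other column of $\cB_{F,d}$, while if $1\notin \cB_{F,d}$ the extra row supplies the missing pivot for the column indexed by $1$. Ordering rows and columns consistently with GLEX and placing the $1$-column last, the resulting submatrix is upper triangular with unit diagonal, hence invertible.

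The only subtle point is the bookkeeping split between the two cases $1\in \cB_{F,d}$ and $1\notin \cB_{F,d}$: in the former the $\lambda_0=1$ row is redundant, while in the latter it supplies the final pivot. Beyond that, the argument is a standard triangulation by leading monomials, and I do not anticipate any genuine obstacle.
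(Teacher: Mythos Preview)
Your proposal is correct and follows essentially the same approach as the paper: both identify the rows of $\hat{A}_{F,d}$ with the polynomials $x^{\aalpha}q_{uv}$ (together with the constant $1$) in $R_{V,k}$, and then argue that under the GLEX column ordering the pivot columns of the row-reduced echelon form are precisely the leading monomials $\cB_{F,d}\cup\{1\}$. Your explicit case split on whether $1\in\cB_{F,d}$ and your construction of a unit-triangular submatrix are more detailed than the paper's terse RREF argument, but the substance is the same.
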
 
\begin{proof}
Using the indexing on the columns described above, we can identify each row of $\hat{A}_{F,d}$ with a polynomial in $R_{V,k}$. In fact these polynomials are either the constant polynomial $1$ or polynomials of the form
$$
x^{\aalpha}q_{uv}(x), \quad |\aalpha|\leq d, \ \{u,v\}\in F.
$$ 
Thus, the row space of $\hat{A}_{F,d}$ corresponds precisely with the space of polynomials of the form $\sum_{uv\in F}r_{uv}(x)q_{uv}(x)$ where each $r_{uv}$ has degree at most $d$. Let $R$ be the row-reduced echelon form of $\tilde{A}_{F,d}$  and let $\cB$ be the basis corresponding to the leading ones of $R$. We claim that $\cB=\cB_{F,d}
\cup \cB$. Indeed, since we have ordered the columns using the GLEX order, the leading terms of polynomials in the non-zero rows of $R$ correspond to principal ones of $R$ and $\cB\subseteq \cB_{F,d}\cup\{1\}$. Conversely, if $x^{\aalpha}$ is the leading monomial of the polynomial $f$ in the row-span of $\hat{A}_{F,d}$, then  we should be able to write $f$ as linear combination of polynomials represented by rows of $R$. However, in such linear combination no cancellation of leading ones can occur and the leading term of $f$ should be a monomial in $\cB$.  
\end{proof}

By Remark \ref{Rem:Core1}, we can rewrite corollary \ref{Cor:Core2} as follows.
\begin{corollary}\label{Cor:Core3}
Let $x^{\aalpha}$ be a monomial whose essential graph $H_{\aalpha}=(U_{\aalpha}, F_{\aalpha})$ is a forest and let $H'=(U',F')$ be a larger forest containing $H_{\aalpha}$. Then, for any $d\geq 0$ and any $x^{\bbeta}$ with $\supp(\bbeta)\subseteq U_{\aalpha}$ 
$$
x^{\bbeta} \in \cB_{F',d}\Rightarrow x^{\bbeta}\in \cB_{F_{\aalpha},d}.
$$
\end{corollary}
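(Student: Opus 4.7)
The plan is to apply Lemma~\ref{Lem:Core1} (in its sharpened form, Remark~\ref{Rem:Core1}) directly to the monomial $x^{\bbeta}$, but taking $H := H_{\aalpha}$ as the base subgraph instead of the descendant graph of $x^{\bbeta}$. Observe first that $H_{\aalpha}$ already satisfies the hypothesis of Lemma~\ref{Lem:Core1}: by step~3 of Definition~\ref{Def:EssentialGraphs}, no pair of connected components of the essential graph $H_{\aalpha}$ has common or adjacent parents in $G$. Since $H_{\aalpha}$ is a forest (given) and $H_{\aalpha} \subseteq H'$ with $H'$ a forest (given), all the structural hypotheses of Lemma~\ref{Lem:Core1} and Remark~\ref{Rem:Core1} are met.

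Next, I would re-examine the proof of Lemma~\ref{Lem:Core1} and verify that it uses the leading monomial $x^{\aalpha}$ only through the inclusion $\supp(\aalpha)\subseteq U$, where $U$ is the vertex set of the base forest. Concretely, in the partial coloring argument of Claim~\ref{Cl:k-3claim}, in the substitution $x_{u^\ast_i}^{k-1} \mapsto x_{u^\ast_i}^{k-1} - q_{u^\ast_i u_i}(x)$ of Claim~\ref{Cl:Polyf1}, and in the successive $x_{u^\ast_i}^{k-2} \mapsto x_{u^\ast_i}^{k-2} - t_i(x)$ replacements, the only property of the monomial ever invoked is that no variable $x_{u^\ast_i}$ with $u^\ast_i \in U' \setminus U$ appears in its support, so that the relevant replacements introduce only monomials strictly smaller than the leading one in the GLEX order. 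Since by hypothesis $\supp(\bbeta) \subseteq U_{\aalpha}$, and the vertices $u^\ast_i$ added in the argument lie in $U' \setminus U_{\aalpha}$, this property holds verbatim with $\aalpha$ replaced by $\bbeta$, and every step of the proof of Lemma~\ref{Lem:Core1} carries over.

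Applying this generalization to $x^{\bbeta}$ therefore produces a representation of the form
\[
f(x) \;=\; \sum_{uv \in F_{\aalpha}} \tilde{r}_{uv}(x)\, q_{uv}(x), \qquad \deg \tilde{r}_{uv} \le d,
\]
whose leading monomial is $x^{\bbeta}$. By the characterization of $\cB_{F_{\aalpha},d}$ proved at the beginning of this section, this is exactly the assertion $x^{\bbeta} \in \cB_{F_{\aalpha},d}$, completing the argument. The only non-routine point is the second step above: auditing the proof of Lemma~\ref{Lem:Core1} and confirming that every appeal to ``no vertex of $H^\ast$ appears in the support of $\aalpha$'' can be replaced by ``no vertex of $H^\ast$ appears in the support of $\bbeta$''; this is where the hypothesis $\supp(\bbeta)\subseteq U_{\aalpha}$ is used.
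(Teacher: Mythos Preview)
Your approach is correct, but it differs from the paper's. The paper instead observes that since $\supp(\bbeta)\subseteq U_{\aalpha}$ and the essential-graph construction is closed both under taking descendants and under adding common/adjacent parents, the essential graph $H_{\bbeta}$ of $x^{\bbeta}$ is itself a subforest of $H_{\aalpha}$, hence of $H'$. It then applies Corollary~\ref{Cor:Core2} together with Remark~\ref{Rem:Core1} directly to $x^{\bbeta}$ with its \emph{own} essential graph, obtaining $x^{\bbeta}\in\cB_{F_{\bbeta},d}$, and lifts trivially to $\cB_{F_{\aalpha},d}$ via $F_{\bbeta}\subseteq F_{\aalpha}$. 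This is more modular: no re-reading of the proof of Lemma~\ref{Lem:Core1} is required, only the already-packaged corollary plus a structural inclusion of essential graphs. Your route, by contrast, extracts a mild generalization of Lemma~\ref{Lem:Core1} in which the base forest may be any descendant-closed subgraph satisfying the parent condition rather than the descendant graph of the monomial itself; this is a perfectly valid alternative and arguably makes the role of the hypotheses more transparent. One small addendum to your audit: besides $\supp(\bbeta)\subseteq U_{\aalpha}$, the proof of Lemma~\ref{Lem:Core1} also uses that the base graph is closed under descendants (this is what forces $u_i^\ast < u_i$ for the boundary vertices); you should note explicitly that $H_{\aalpha}$ enjoys this property by construction.
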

\begin{proof}
Since $\supp(\bbeta)\subseteq U_{\aalpha}$, the essential graph $H_{\bbeta}$ of $x^{\bbeta}$ is a subforest of $H_{\aalpha}$. This follows from the fact that $H_{\aalpha}$ is closed under descendants and common or adjacent ancestors. In particular, $H_{\bbeta}$ is subforest of $H'$ as well. By Corollary \ref{Cor:Core2} and Remark \ref{Rem:Core1}, if $x^{\bbeta}$ is the leading term of a polynomial of the form $\sum_{uv\in F''}r_{uv}q_{uv}$ where each $r_{uv}$ has degree at most $d$, then $x^{\bbeta}$ is the leading term of a polynomial of the form $\sum_{uv\in F_{\bbeta}}\tilde{r}_{uv}q_{uv}$ where each $\tilde{r}_{uv}$ has degree at most $d$ as well.
\end{proof}
We are ready to prove our main result.
\begin{proof}[Proof of Theorem \ref{Thm:Main}]
Let $d\geq 0$ be such that $2(d+k-1)<\frac{g}{2k}-1$. Then, for every monomial $x^{\aalpha}$ of degree at most $d+k-1$, its essential graph $H_{\aalpha}=(U_{\aalpha}, F_{\aalpha})$ is a forest. In particular, the system 
\begin{equation}\label{Eq:Main1}
\hat{A}_{F_{\aalpha}, d} \cdot\mu = \hat{c}_{F_{\aalpha},d}
\end{equation}
has a solution. Let $\mu^{(\aalpha)}$ be the basic solution of \eqref{Eq:Main1} corresponding to the basis $\cB_{F_{\aalpha}, d}$ and set $\lambda_{\aalpha}:=\mu^{(\aalpha)}_{\aalpha}$. Notice that the essential graph of the constant polynomial $1$ has no edges, thus the system \eqref{Eq:Main1} has only one equation, namely $\mu_{0}=1$ and as a consequence $\lambda_0=1$. We claim that $\lambda=(\lambda_{\aalpha})_{|\aalpha|\leq d+k-1}$ is a Dual Nullstellensatz Certificate of degree $d$, i.e., $\lambda$ is a solution to the system 
\begin{equation}\label{Eq:Main2}
\hat{A}_{E, d} \cdot\lambda = \hat{c}_{E,d}.
\end{equation}
Indeed, let $x^{\aalpha}$ be a monomial of degree $|\aalpha|\leq d$ and let $\{u,v\}\in E$ be any edge of $G$. Our goal is to show that
\begin{equation}\label{Eq:Main3}
\sum_{r\in \Z_k} \lambda_{\aalpha+r(e_u-e_v)-e_v}=0.
\end{equation}
Let $r\in \Z_k$ be such that the $u$-th and $v$-th coordinates of $\bbeta:=\aalpha+r(e_u-e_v)-e_v$ are non-zero. In other words, $r$ is such that the support of $\bbeta$ is maximal among all the multi-indices appearing in \eqref{Eq:Main3}. In particular, for any other $r'\in \Z_k$ and $\eeta:=\aalpha+r'(e_u-e_v)-e_v$ we have $\supp(\eeta)\subseteq \supp(\bbeta)\subseteq U_{\bbeta}$. 

Let $R_{\eeta}$ and $R_{\bbeta}$ be the row-reduced echelon forms of $\tilde{A}_{F_{\eeta},d}$ and $\tilde{A}_{F_{\bbeta},d}$ respectively. We claim that the rows of $R_{\eeta}$ are rows of  $R_{\bbeta}$ as well. Indeed, the rows of $\tilde{A}_{F_{\eeta},d}$ are rows of $\tilde{A}_{F_{\bbeta},d}$ and as a consequence every row in $R_{\eeta}$ is in the row span of the rows of $R_{\bbeta}$. However, by Corollary \ref{Cor:Core3}, every column of $R_{\eeta}$ corresponding to a principal one of $R_{\bbeta}$  is also a principal one of $R_{\eeta}$. Thus, each row of $R_{\eeta}$ cannot be obtained by non-zero combination of two or more different rows of $R_{\bbeta}$. 

Since every row of $R_{\eeta}$ appears in $R_{\bbeta}$, for every column $\eeta'$ of $R_{\eeta}$ we have $\mu^{(\eeta)}_{\eeta'}=\mu^{(\bbeta)}_{\eeta'}$. In particular, $\mu^{(\eeta)}_{\eeta}=\mu^{(\bbeta)}_{\eeta}$ and 
\begin{align*}
\sum_{r\in \Z_k} \lambda_{\aalpha+r(e_u-e_v)-e_v}&=\sum_{r\in \Z_k} \mu_{\aalpha+r(e_u-e_v)-e_v}^{(\aalpha+r(e_u-e_v)-e_v)}, \\
&=\sum_{r\in \Z_k}\mu_{\aalpha+r(e_u-e_v)-e_v}^{(\bbeta)}=0
\end{align*}
as desired.

\end{proof}
%\begin{lemma}\label{Lem:ModIVk}
%Let $G$ be a non-$k$-colorable graph. Then the system \eqref{BCOL_k} has a Nullstellensatz Certificate of degree $d$ if and only if 
%\begin{equation}
%\sum_{\{u,v\}\in E}r_{uv}(x) q_{uv} (x)\equiv 1 \mod \cI_{V,k} 
%\end{equation}
%for some polynomials $r_{uv}\in \K[x_u: u\in V]/\cI_{V,k}$ of degree at most $d$.
%\end{lemma}

%Final Remarks
\section{Concluding Remarks}

In this article we have studied the behavior of the Nullstellensatz and Polynomial Calculus approach to graph $k$-colorability for graphs having large girth. We showed that  as the girth of a non-$k$-colorable graph increases, the degrees of the Nullstellensatz certificates must grow as well. This was obtained by studying the structure of the principal ideals generated by polynomials in Bayer's formulation corresponding to sub-forests of the graph and applying a general technique introduced by Aleknovich and Razborov \cite{AR03}.  

In the words of Aleknovich and Razborov, informally, "\textit{everything we can infer in small degree we can also infer locally}". This is precisely what motivated our work: if a non-$k$-colorable graph $G$ has a small Nullstellensatz certificate, then one should be able to detect its non-$k$-colorability by looking at the local structure of $G$. We observed that if the essential graph of monomials of low degree were forests, then it was possible to build dual Nullstellensatz Certificates in a local fashion. One of our future goals is to understand whether this sparsity property of the essential graphs can be further extended, say to essential graphs that are not trees, but other class of graphs such as bipartite. 
%\Jnote{Think of a well posed example... triangle-free graphs, Mycielskian graphs?}.

One of the reasons why the Nullstellensatz method is appealing is that the linear systems used to find certificates of non-$k$-colorability using Bayer's formulation are quite sparse. In addition,  computations over finite fields are possible and in many cases can be carried out very efficiently. For instance, detecting non-$3$-colorability can be done by solving linear systems over $\F_2$. Thus,  in principle, it may be possible to use methods that exploit the sparseness of the system such as Coppersmith's Block Wiedemann or Block Lanczos Methods which work on finite field algebra. Although, implementations of the Nullstellensatz method exist \cite{Margulies08}, to the best of our knowledge, an implementation using the aforementioned techniques is not available to the public. 

The problem of characterizing when the Nullstellensatz method effectively certifies non-$k$-colorability is wide open. For the case $k=3$, De Loera et al. \cite{DeLoera10} obtained a characterization of all non-$3$-colorable graphs having degree one Nullstellensatz Certificate over $\F_2$. However, we do not know what classes of non-$3$-colorable graphs admit a degree four Nullstellensatz Certificate.  

\begin{openproblem}
Characterize all non-$3$-colorable graphs whose Bayer's formulation requires a Nullstellensatz certificates of degree at most four over $\F_2$.
\end{openproblem}  

Even simpler questions like determining the size of the smallest degree of a Nullstellensatz certificate for proving the non-$k$-colorability of the complete graph $K_{k+1}$ is open for general $k$. De Loera et al. \cite{loeraetal14} obtained computational results for $K_{k+1}$ with $k\leq 10$ over fields $\F_q$ with $q\in \{2,3,5,7\}$. We do not know the exact minimum degrees for $k\geq 8$.  

\begin{openproblem}
Let $p$ be a prime and let $k\geq 8$ be relatively prime to $p$. Find the smallest degree Nullstellensatz Certificate for proving the non-$k$-colorability of the complete graph $K_{k+1}$ over $\F_p$.
\end{openproblem}

Another interesting line of research is to study how the Nullstellensatz method behaves with respect to graph operations such as the Haj\'{o}s construction and other similar operations. Recall that any $(k+1)$-critical graph $G$, i.e., a graph $G$ such that $\chi(G)=k+1$, but $\chi(H)<k+1$ for every proper subgraph $H\subseteq G$, can be obtained from $K_{k+1}$ using repeated iterations of the Haj\'{o}s construction. Since the Nullstellensatz Certificates for detecting the non-$k$-colorability of $(k+1)$-critical graphs are not universally bounded, the following question arises. 
\begin{openproblem}\cite{Omar15}
Let $G_1$ and $G_2$ be $(k+1)$-critical graphs and let $G$ constructed from $G_1$ and $G_2$ using the Haj\'os Construction. What is the relationship between the minimum degree Nullstellensatz certificates of $G_1$, $G_2$ and $G$?  
\end{openproblem}  

Finally, it is our general belief that, if a non-$k$-colorable graph $G$ has a small degree Nullstellensatz certificate, then one should be able to detect its non-$k$-colorability by looking at the local structure of the graph. Our results follow this line of reasoning by exploiting the fact that the \textit{essential graphs} of monomials of low degree were forests for graphs of high girth to build dual certificates. 
\begin{openproblem}
What families of graphs admit an ordering of its vertices in such a way that the \textit{essential graphs} of monomials of low degree are forests? 
\end{openproblem}
In addition, it may be interesting to see if our methods can be extended to the case in which the essential graphs of monomials of low degree are not forests, but  another family of graphs whose chromatic number is easy to calculate, such as bipartite graphs.   

%Can we characterize all non-$3$-colorable graphs whose Bayer's formulation requires Nullstellensatz certificates of degree at most four? Is the Nullstellensatz method efficient for detecting the chromatic number of perfect graphs? We do not know of a family of planar non-$3$-colorable graphs that require large Nullstellensatz certificates. Even simple questions like determining the size of the smallest Nullstellensatz degree certificate for proving the non-$k$-colorability of $K_{k+1}$ is open for general $k$  

\appendix
\section{Proof of Claim \ref{Cl:Exa} }
\label{AppendixA}
In this appendix we provide computational certificates for Claim \ref{Cl:Exa} using \verb`Macaulay2`. For this, we simply have check that the monomial $x^{\aalpha}=x_5^2x_6x_7x_8^2x_9$ is not divisible by any leading term in a Gr\"{o}bner basis of the ideals $\cI_{F_1}, \cI_{F_2}, \cI_{F_3}$ and $\cI_{F_4}$, where $F_1=E\setminus\{1,4\}$, $F_2=E\setminus \{2,6\}$, $F_3=E\setminus\{3,8\}$ and $F_4=E\setminus\{4,10\}$. The output provided by Macaulay2 is the following:
\begin{enumerate}
    \item[($\cI_{F_1}$)] \begin{verbatim}
Leading Monomials for I_{F_1}
      {-2} | x_4x_9      |
      {-2} | x_4^2       |
      {-2} | x_3x_7      |
      {-2} | x_3^2       |
      {-2} | x_2x_5      |
      {-2} | x_2^2       |
      {-2} | x_1x_2      |
      {-2} | x_1^2       |
      {-3} | x_9^3       |
      {-3} | x_7^3       |
      {-3} | x_6^3       |
      {-3} | x_5^3       |
      {-3} | x_1x_3x_6   |
      {-3} | x_1x_3x_5   |
      {-4} | x_1x_6^2x_7 |
      {-4} | x_1x_5^2x_7 |
      {-4} | x_1x_5^2x_6 |
\end{verbatim}

\item[($\cI_{F_2}$)]
\begin{verbatim}
Leading Monomials for I_{F_2}
      {-2} | x_4x_9            |
      {-2} | x_4^2             |
      {-2} | x_3x_7            |
      {-2} | x_3^2             |
      {-2} | x_2^2             |
      {-2} | x_1x_3            |
      {-2} | x_1x_2            |
      {-2} | x_1^2             |
      {-3} | x_9^3             |
      {-3} | x_8^3             |
      {-3} | x_7^3             |
      {-3} | x_5^3             |
      {-3} | x_2x_3x_5         |
      {-3} | x_1x_4x_8         |
      {-3} | x_1x_4x_7         |
      {-3} | x_1x_4x_5         |
      {-4} | x_2x_4x_5x_7      |
      {-4} | x_2x_3x_4x_8      |
      {-4} | x_1x_8^2x_9       |
      {-4} | x_1x_7^2x_9       |
      {-4} | x_1x_7^2x_8       |
      {-4} | x_1x_5^2x_9       |
      {-4} | x_1x_5^2x_8       |
      {-4} | x_1x_5^2x_7       |
      {-5} | x_2x_5x_7^2x_9    |
      {-5} | x_2x_4x_7^2x_8    |
      {-5} | x_2x_4x_5^2x_8    |
      {-5} | x_2x_3x_8^2x_9    |
      {-6} | x_2x_5x_7x_8^2x_9 |
      {-6} | x_2x_5^2x_8^2x_9  |
      {-6} | x_2x_5^2x_7^2x_8  |
\end{verbatim}
\item[($\cI_{F_3}$)]
\begin{verbatim}
Leading Monomials for I_{F_3}
 	  {-2} | x_4x_9              |
      {-2} | x_4^2               |
      {-2} | x_3^2               |
      {-2} | x_2x_5              |
      {-2} | x_2^2               |
      {-2} | x_1x_3              |
      {-2} | x_1x_2              |
      {-2} | x_1^2               |
      {-3} | x_9^3               |
      {-3} | x_7^3               |
      {-3} | x_6^3               |
      {-3} | x_5^3               |
      {-3} | x_2x_3x_6           |
      {-3} | x_1x_4x_7           |
      {-3} | x_1x_4x_6           |
      {-3} | x_1x_4x_5           |
      {-4} | x_3x_4x_5x_7        |
      {-4} | x_2x_3x_4x_7        |
      {-4} | x_1x_7^2x_9         |
      {-4} | x_1x_6^2x_9         |
      {-4} | x_1x_6^2x_7         |
      {-4} | x_1x_5^2x_9         |
      {-4} | x_1x_5^2x_7         |
      {-4} | x_1x_5^2x_6         |
      {-5} | x_3x_5^2x_6x_9      |
      {-5} | x_3x_4x_5^2x_6      |
      {-5} | x_2x_4x_6^2x_7      |
      {-5} | x_2x_3x_7^2x_9      |
      {-6} | x_3x_5^2x_7^2x_9    |
      {-6} | x_3x_5^2x_6x_7^2    |
      {-6} | x_2x_6^2x_7^2x_9    |
      {-7} | x_3x_5x_6^2x_7^2x_9 |
\end{verbatim}
\item[($\cI_{F_4}$)]
\begin{verbatim}
Leading Monomials for I_{F_4}
	  {-2} | x_4^2               |
      {-2} | x_3x_7              |
      {-2} | x_3^2               |
      {-2} | x_2x_5              |
      {-2} | x_2^2               |
      {-2} | x_1x_3              |
      {-2} | x_1x_2              |
      {-2} | x_1^2               |
      {-3} | x_8^3               |
      {-3} | x_7^3               |
      {-3} | x_6^3               |
      {-3} | x_5^3               |
      {-3} | x_2x_3x_6           |
      {-3} | x_1x_4x_8           |
      {-3} | x_1x_4x_7           |
      {-3} | x_1x_4x_6           |
      {-3} | x_1x_4x_5           |
      {-4} | x_3x_4x_5x_8        |
      {-4} | x_2x_4x_6x_7        |
      {-4} | x_2x_3x_4x_8        |
      {-4} | x_1x_7^2x_8         |
      {-4} | x_1x_6^2x_8         |
      {-4} | x_1x_6^2x_7         |
      {-4} | x_1x_5^2x_8         |
      {-4} | x_1x_5^2x_7         |
      {-4} | x_1x_5^2x_6         |
      {-5} | x_4x_5^2x_6x_7      |
      {-5} | x_3x_4x_5^2x_6      |
      {-5} | x_2x_4x_7^2x_8      |
      {-5} | x_2x_4x_6^2x_8      |
      {-6} | x_4x_5^2x_7^2x_8    |
      {-6} | x_3x_5^2x_6x_8^2    |
      {-6} | x_2x_6^2x_7^2x_8    |
      {-7} | x_4x_5x_6^2x_7^2x_8 |
\end{verbatim}
\end{enumerate}
\vspace{0.5cm}
The code used to generate these terms is the following:
\begin{verbatim}
    clearAll
    --Number of Colors 
    k=3;
    
    -- Number of Vertices
    n=10;
    
    -- Edge set of the graph (uncomment desired ideal)
    E=matrix{{1,2},{1,3},{2,5},{2,6},{3,7},{3,8},{4,9},{4,10}}; --F1
    --E=matrix{{1,2},{1,3},{1,4},{2,5},{3,7},{3,8},{4,9},{4,10}}; --F2
    --E=matrix{{1,2},{1,3},{1,4},{2,5},{2,6},{3,7},{4,9},{4,10}}; --F3
    --E=matrix{{1,2},{1,3},{1,4},{2,5},{2,6},{3,7},{3,8},{4,9}}; --F4
    
    --Number of edges
    m=numgens target E;
    
    
    -- Define the ring and Monomial Order
    R=QQ[x_1..x_n, MonomialOrder=>GLex]
    
    --Polynomials p_u
    M_I=matrix{{x_1^k-1}};
    for i from 1 to n do (
        M_I=M_I|matrix{{x_i^k-1}};
        );
    
    --Polynomials q_uv
    num = (x_(E_(0,0))^k-x_(E_(0,1))^k);
    den = (x_(E_(0,0))-x_(E_(0,1)));
    M_J=matrix{{num//den}};
    
    for  j from 1 to m-1 do (
         num =(x_(E_(j,0))^k-x_(E_(j,1))^k);
         den = (x_(E_(j,0))-x_(E_(j,1)))
         mod = matrix{{num//den}};
         M_J=M_J|mod;
        );
    
    --Define the ideal
    ColId=ideal(M_I|M_J);
    
    --Find a Grobner basis
    G=gb(ColId);
    GensG=gens G;
    
    --Output the leading terms of the basis
    transpose(leadTerm(GensG))
\end{verbatim}

\section*{Acknowledgments}
The authors thank Jes\'us A. De Loera for valuable comments, insights and feedback during this research, in particular, for bringing \cite{LN17} to our attention. In addition, the authors thank Bertrand Guenin and Jochen Koenemann for their valuable comments on a preliminary version of the results. 

\bibliographystyle{siamplain}
\bibliography{references}
\end{document}